\documentclass[review]{elsarticle}

\usepackage{latexsym}
\usepackage{graphics}
\usepackage[margin=1.25in]{geometry}
\usepackage{amsmath}
\usepackage{amsfonts}
\usepackage{setspace}
\usepackage{hyperref}

\makeatletter
\def\@opargbegintheorem#1#2#3{\trivlist
   \item[]{\bfseries #1\ #2\ (#3)} \itshape}
\makeatother

\journal{Journal of \LaTeX\ Templates}

\usepackage{numcompress}\bibliographystyle{model6-num-names}

\newtheorem{theorem}{Theorem}
\newtheorem{prop}{Proposition}
\newtheorem{conj1}{Conjecture}
\newproof{proof}{Proof}
\newtheorem{lemma}{Lemma}

\setcounter{tocdepth}{2}
\setcounter{secnumdepth}{2}

\numberwithin{equation}{section}
\numberwithin{theorem}{section}
\numberwithin{prop}{section}

\begin{document}

\begin{spacing}{0.89}

\begin{frontmatter}

\title{A generalization of Picard-Lindelof theorem/ the method of characteristics to systems of PDE}

\author[mymainaddress,mysecondaryaddress]{Erfan Shalchian\fnref{myfootnote}}
\address[mysecondaryaddress]{Department of Physics, University of Toronto, 60 St. George St., Toronto ON M5S 1A7, Canada }
\address[mymainaddress]{Department of Mathematics and Department of Physics, Sharif Institute of Technology, Azadi St., Tehran, Iran }
\ead{erfanshalchian@gmail.com,eshalchian@physics.utoronto.ca}

\fntext[myfootnote]{Full name: M. Erfan Shalchian T.}

\begin{abstract}
We generalize Picard-Lindelof theorem/ the method of characteristics to the following system of PDE: $C_{il}(x,y) {\partial y_i / \partial x_l} + {\partial y_i / \partial x_m} = D_i(x,y)$. With a Lipschitz or $C^r$ $C_{il},D_i: [-a, a]^{m} \times [-b, b]^{n} \rightarrow \mathbb{R}$ and initial condition $I_i: [-\bar{a}, \bar{a}]^{m-1} \rightarrow (-b,b)$, $\bar{a} \leq a$, we obtain a local unique Lipschitz or $C^r$ solution $f$, respectively that satisfies the initial condition, $f_i (v, 0 ) = I_i(v)$, $v \in [-\bar{a}, \bar{a}]^{m-1}$. To construct the solution we set bounds on the value of the solution by discretizing the domain of the solution along the direction perpendicular to the initial condition hyperplane. As the number of discretization hyperplanes is taken to infinity the upper and lower bounds of the solution approach each other, hence this gives a unique function for the solution ($Ufs$). A locality condition is derived based on the constants of the problem. The dependence of $C_{il}$, $D_i$ and $I_i$ on parameters, the generalization to nonlinear systems of PDE and the application to hyperbolic quasilinear systems of first order PDE in two independent variables is discussed.
\end{abstract}

\begin{keyword}
\texttt{hyperbolic quasilinear systems of PDE \sep method of characteristics \sep Lipschitz continuity \sep upper and lower bounds }
\end{keyword}

\end{frontmatter}

\tableofcontents
\end{spacing}

\onehalfspacing
\section{Introduction and outline}
The method of characteristics for solving a first order partial differential equation in an unknown function has been known to mathematicians in the past centuries, however, the generalization of this method to systems of first order PDE has remained unknown (e.g.\cite{01}: Chapter VI, Section 7 it is stated that there is no analog of the method of characteristics for systems of first order PDE). In this work we will prove theorems, in particular Theorem \ref{eq:1.1} below, that will generalize the result obtained using the method of characteristics, typically applicable to one equation with one unknown function, to systems of first order PDE which the partial derivatives of each function appear in separate equations. Theorem \ref{eq:1.1} can also be considered as the generalization of the Picard-Lindelof theorem of ODE to PDE. The main result of this work proven is Section \ref{sec:3} is the following Theorem:
\begin{theorem}[A generalization of Picard-Lindelof theorem/ the method of characteristics to systems of PDE]
\label{theorem1.1}
\onehalfspacing
Let $C_{il}, D_i : P \rightarrow \mathbb{R} , \ i = 1, ..., n , l =1, ..., m-1$, $m \geq 2$, be Lipschitz continuous or $C^r$ $( r\geq 1)$ functions defined on the parallelpiped $P \equiv P_1 \times P_2$ with $P_1 \equiv \{ x \in \mathbb{R}^m | \left\Vert x - x_0 \right\Vert_\infty \leq a, x_0 \in \mathbb{R}^m \}$ and $ P_2 \equiv \{ y \in \mathbb{R}^n |  \left\Vert y - y_0 \right\Vert_\infty \leq b, y_0 \in \mathbb{R}^n \}$. And let the Lipschitz continuous or $C^r$ initial condition function $I : V \rightarrow P_2$ for $V \equiv \{ x \in P_1 \vert x_m = x_{0m} , |x_l - x_{0l}| \leq \bar{a} \}$, $0 < \bar{a} \leq a$ and $M_{\left\Vert I-y_0 \right\Vert} < b$ with $M_{\left\Vert I-y_0 \right\Vert} \equiv \max \{\left\Vert I(u) - y_0 \right\Vert_\infty \vert u \in V \}$ be given. The following system of partial differential equations
\begin{flalign}
\label{eq:1.1}
C_{i1}(x,y) {\partial y_i \over \partial x_1} + ... + C_{im-1}(x,y) {\partial y_i \over \partial x_{m-1}} + {\partial y_i \over \partial x_m} = D_i(x,y)
\end{flalign}
has a unique Lipschitz continuous \footnote[1]{By Lipschitz continuous solution we mean a Lipschitz continuous function that solves the system of PDE \ref{eq:1.1} at its differentiable points. By Rademacher theorem (for a proof refer to \cite{04}) a Lipschitz continuous function is differentiable almost everywhere.} or $C^r$ solution respectively, $f : B \rightarrow P_2$ for $V \subset B \subseteq P_1$, $B$ containing a neighbourhood of $V_{int}$, with $V_{int} \equiv \{ x \in P_1 \vert x_m = x_{0m} , |x_l - x_{0l}| < \bar{a} \}$ and $f$ reducing to the initial condition function $I$ on $V$, $f(u) = I(u)$ for $u \in V $.
\end{theorem}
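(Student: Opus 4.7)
The $i$-th equation in \eqref{eq:1.1} is a transport equation for $y_i$ along the characteristic vector field $(C_{i1},\ldots,C_{i,m-1},1)$. Since the $x_m$-component equals $1$, every characteristic can be parameterized by $x_m$, and a candidate solution $f$ must, along the $i$-th characteristic starting from $(u,x_{0m}) \in V$, satisfy the ODE system
\[
\frac{dx_l^{(i)}}{dx_m} = C_{il}\bigl(x^{(i)},x_m,f(x^{(i)},x_m)\bigr), \qquad \frac{d f_i}{dx_m} = D_i\bigl(x^{(i)},x_m,f(x^{(i)},x_m)\bigr).
\]
This is almost the ODE situation handled by Picard-Lindelof, except that the $n$ characteristic families are distinct and their vector fields couple through the dependence on all components of $f$ at points that lie on different characteristics, so one cannot integrate each family independently.

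Following the strategy announced in the abstract, I would partition the $x_m$-interval $[x_{0m}-\bar{a},x_{0m}+\bar{a}]$ into $N$ slabs of width $\Delta = 2\bar{a}/N$ and build the solution slab-by-slab outward from $V$. On slab $k$, given Lipschitz bracketing functions $\underline{f}^{N}_k \le \overline{f}^{N}_k$ on the hyperplane $x_m = x_{0m}+k\Delta$, I would propagate to the next hyperplane by integrating the characteristic ODEs with $C_{il}$ and $D_i$ replaced, respectively, by their maxima and minima over the box of admissible $(x,y)$ on that slab; this produces an upper and a lower envelope on $x_m = x_{0m}+(k+1)\Delta$ that bracket every true solution. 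The Lipschitz hypothesis on $C_{il},D_i$ makes the spread of those extremals $O(\Delta)$, and the smallness of $\bar{a}$ together with $M_{\|I-y_0\|}<b$ keeps every extremal characteristic inside $P$ throughout the construction; this is precisely where the theorem's locality condition enters, and it also determines the domain $B$ of the solution as the region swept out by characteristics emanating from $V_{int}$.

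A Gronwall comparison then gives $\|\overline{f}^{N}-\underline{f}^{N}\|_\infty \le C\Delta\, e^{L\bar{a}}$, with $L$ controlled by the Lipschitz constants of $C_{il}$ and $D_i$, so as $N \to \infty$ the upper and lower envelopes squeeze to a unique Lipschitz limit $f$ which by the characteristic representation solves \eqref{eq:1.1} at every point of differentiability and reduces to $I$ on $V$. To upgrade to the $C^r$ statement one differentiates the characteristic ODE system $r$ times in the initial parameter $u \in V$ and applies Picard-Lindelof to the resulting variational system, transferring $C^r$ regularity of $C_{il}, D_i, I$ to the characteristic map and hence to $f$. Uniqueness is immediate, since any other solution generates characteristics that obey the same bracketing inequalities and so must lie between $\underline{f}^{N}$ and $\overline{f}^{N}$ for every $N$. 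The main obstacle I expect is the coupling between the $n$ characteristic families during the bound propagation: the value $f_i$ at a point is traced back along the $i$-th characteristic, but the coefficients along that characteristic depend on all $f_j$, which themselves are reconstructed from different characteristics; preventing the enlargement of one component's envelope from feeding back explosively into the others is what will force the precise quantitative form of the locality condition and will require the comparison argument to be carried out jointly as a vector inequality rather than componentwise.
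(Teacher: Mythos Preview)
Your strategy matches the paper's: discretize in $x_m$, propagate upper/lower envelopes, and let the number of slabs tend to infinity. But there is a genuine gap in the analysis that your Gronwall estimate $\|\overline f^N-\underline f^N\|_\infty\le C\Delta\,e^{L\bar a}$ glosses over, and it is exactly the mechanism that produces the locality condition.

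When you propagate $\overline f_i,\underline f_i$ from the hyperplane $x_m=x_{0m}+(k-1)\Delta$ to $x_m=x_{0m}+k\Delta$, the value at a point $x$ is obtained by maximizing/minimizing over a ``landing zone'' on the previous hyperplane, namely the set of points that some admissible $i$-th characteristic through $x$ could have come from. The width of that zone is governed by $\sup C_{il}-\inf C_{il}$ over the admissible $(x,y)$-box, which in turn depends on the spread of $y$-values there; but that spread is not just $\overline f^{N,k-1}-\underline f^{N,k-1}$ at a single point, it also includes the variation of the envelopes across the landing zone, i.e.\ a term of size (Lipschitz constant of $\overline f^{N,k-1}$) $\times$ (zone width). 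This feedback forces you to track the Lipschitz constants $L^{N,k}$ of the envelope functions themselves, and the paper shows they satisfy a recursion of Riccati type,
\[
L^{N,k}=L^{N,k-1}\Bigl(1+\bigl((m-1)L_C+nL_D\bigr)\tfrac{\alpha}{2^N}\Bigr)+n(m-1)L_C\tfrac{\alpha}{2^N}\bigl(L^{N,k-1}\bigr)^2+L_D\tfrac{\alpha}{2^N},
\]
whose quadratic term can drive $L^{N,k}\to\infty$ in finitely many steps unless $\alpha$ is small enough. The explicit locality condition comes from bounding this Riccati recursion, not from a linear Gronwall inequality; once $L^{N,k}$ is shown bounded, your $O(\Delta)$ spread and the convergence of the envelopes follow as you describe. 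You correctly anticipate a nonlinear feedback, but locate it in the inter-component coupling; the actual source is the dependence of the characteristic landing zone on the spatial Lipschitz constant of the bounds.

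Your $C^r$ argument also needs modification: you cannot simply differentiate ``the characteristic ODE system'' in $u$ and invoke Picard--Lindel\"of, because the right-hand side of the $i$-th characteristic ODE involves $f(x^{(i)}(t))$, and $f$ at those points is determined by \emph{other} characteristic families, so there is no closed ODE for the variational system. The paper instead builds explicit $C^r$ discrete approximants $f^{N,k}$ (equation~(3.52)), shows their $x_l$-derivatives satisfy the same Riccati-type bound and are equicontinuous uniformly in $N$, and then applies Arzel\`a--Ascoli to pass to the limit.
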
 

The proof of Theorem \ref{theorem1.1} is far from trivial. The main difficulty in generalizing the method of characteristics to the system of PDE of the type \ref{eq:1.1} is that the characteristic curves for each equation are distinct therefore it cannot be reduced to systems of ODE. One way to gain control over these characteristics is to set bounds on the value of the solution satisfying an initial condition and the characteristic curves which are distinct for each equation by discretizing the hyperplanes along the direction perpendicular to the initial condition hyperplane. If the bounds are set in an appropriate and optimal way it can be shown that in the limit that the number of discretization hyperplanes is taken to infinity the bounds for the value of the solution and the characteristic curves approach each other, hence this gives a unique function for the solution ($Ufs$).

It should be noted that there is a more general and abstract theorem in hyperbolic systems of partial differential equations that is related to the system of PDE of relation \ref{eq:1.1}, however the conditions of that theorem, being a more general result are not as minimal as the conditions of Theorem \ref{eq:1.1}. For example the differentiability assumptions of that theorem have to increase proportional to the number of independent variables used in the hyperbolic system of PDE in order for the solution to be a bounded ordinary function possessing finite derivatives to a certain order (for more details refer to \cite{03}, Chapter VI, Section 10). On the other hand the conditions of Theorem \ref{theorem1.1} are as minimal as they can be. Another interesting feature of Theorem \ref{theorem1.1} is the method which it is proven with, which is an elegant generalization of the method of characteristics, applicable to one equation with one unknown function, to the system of PDE of \ref{eq:1.1}. The difference now is that there are many characteristics coming out of each point of the domain which the solution is being constructed on, therefore it is not possible to reduce it to systems of ODE. As described in the previous paragraph one way to gain control over these characteristics and the value of the solution, is to set bounds on them by discretizing the hyperplanes parallel to the initial condition hyperplane and later show that these bounds approach each other as the number of discretization hyperplanes goes to infinity. Also we derive explicit expressions for the locality condition and the Lipschitz constant of the solution of the PDE of Theorem \ref{eq:1.1} based on the constants of the problem as follows:
\begin{flalign}
\label{eq:1.2}
& \alpha  <  {1 \over  \exp \left( \theta(c_1) c_1  \alpha \right) n(m-1)L_C (L_I + 1/n) } \ \ , \ \ c_1 = n L_D - (m-1)L_C \\ & \label{eq:1.3} L_f = {(L_I + 1/n) \exp(c_1 \alpha) \over 1 - n (m-1) L_C \alpha (L_I + 1/n) \exp \left( \theta(c_1) c_1 \alpha \right)} -1/n \\ & \label{eq:1.4} L_{Ufs} = \max \left\{ L_f, M_{\Vert D \Vert} + L_f (m-1) M_{\Vert C \Vert} \right\}
\end{flalign}
$L_C$ and $L_D$ refer to the Lipschitz constants of the $C_{il}$ and $D_i$ functions on $P$, respectively. $\theta(c_1)$ is the step function. $L_I$ is the Lipschitz constant of the initial condition functions $I_i$ on $V$. $M_{\Vert D \Vert}$ and $M_{\Vert C \Vert}$ refer to a bound for $|D_{i}|$ and $|C_{il}|$ on $P$, respectively. The extent which, in general, the solution can be constructed in the $x_m$ direction above or below the initial condition hyperplane is given by the locality condition of \ref{eq:1.2}: $-\alpha \leq x_m - x_{0m} \leq + \alpha$. Also $\alpha \leq \bar{\alpha}$ with $ \bar{\alpha} = \min \{ a, (b - M_{\left\Vert I-y_0 \right\Vert})/M_{\Vert D \Vert} \} $ to make sure the domain and range of the solution lie within $P_1$ and $P_2$, respectively. With $L_f$ in relation \ref{eq:1.3} being the Lipschitz constant of the solution along the hyperplanes parallel to the initial condition hyperplane, $L_{Ufs}$ in relation \ref{eq:1.4} gives the total Lipschitz constant of the solution on its domain of construction.

One of the applications of Theorem \ref{theorem1.1} is in regard to hyperbolic quasilinear systems of first order PDE in two independent variables which, as an example, are used to describe the one dimensional space flow of fluids. These systems of PDE can be reduced to the PDE of Theorem \ref{theorem1.1} by differentiating the system, diagonalizing its coefficient matrix and performing a change of function variables, therefore Theorem \ref{theorem1.1} and the method which its solution is constructed (this is discussed in Section \ref{sec:3}) offer an alternative way, which is more direct and convenient especially for finding a numerical solution, as compared to other methods, e.g. iteration methods \cite{03}, for constructing the solution of hyperbolic quasilinear systems of first order PDE in two independent variables.

In order to illustrate the main idea of proving Theorem \ref{eq:1.1} in a simpler context, in Section \ref{sec:2} we present an alternative proof of the Picard-Lindelof theorem of ODE by setting upper and lower bounds on the value of the solution of the system of ODE: $y' = f(t,y), \ y(t_0) = y_0$, by discretizing the time interval $[t_0, t_0 + \alpha]$ into $2^N$ partitions at the $N$'th step
\begin{equation}
\label{eq:1.5}
y_{i,m}^{N,k}  \leq y_i(t_0 + k {\alpha / 2^N}) \leq y_{i,M}^{N,k}, \ \ \  k = 1, ..., 2^N, \ \ \  y_{i,M}^{N,0} = y_{i,m}^{N,0} = y_{0i}
\end{equation} 
 and find a recursion relation for $\Delta y^{N,k} \geq y_{i,M}^{N,k} - y_{i,m}^{N,k}$
 \begin{equation}
\label{eq:1.6}
\Delta y^{N,k} = \Delta y^{N,k-1}( 1 + nL_f \delta t)  + C \delta t ^2 + \epsilon \delta t, \ \ \ \Delta y^{N,0} = 0
\end{equation}
$\delta t  = \alpha /2^N$, $C$ a bounded constant, $L_f$ the Lipschitz constant of $f(t,y)$ and $\epsilon \rightarrow 0$ as ${\delta t \rightarrow 0}$. After solving relation \ref{eq:1.6} we find $\Delta y^{N,k} \sim 1 / 2^N + \epsilon $ therefore as $N \rightarrow \infty$, the upper and lower bounds for the solution in \ref{eq:1.5} approach each other, hence this gives a unique function for the solution to the system of ODE. We will see that this alternative way of proving the Picard-Lindelof theorem is more easily generalizable to the quasilinear system of PDE of \ref{eq:1.1}. Setting upper and lower bounds on the value of the solution enables us to have more control over the possible range of values of the solution and the bounds at the $N+1$'th step of partitioning naturally fall within the bounds at the $N$'th step of partitioning, therefore with denoting the set of possible ranges of values for the solution on the time interval at the $N$'th step of partitioning by $R^N$, these sets form a nested sequence $R^N \supseteq R^{N+1} \supseteq R^{N+2} \supseteq ... $ , hence in order to show that this nested sequence converges to the graph of a unique function for solution we only need to show that at the $N$'th step of the partitioning the difference between the upper and lower bounds of the solution is of order ${1 / 2^N}$. In the current methods which we make successive approximations to the solution without finding bounds for the solution, e.g. by making successive approximations to the solution from the integral equation of the system of ODE as in \cite{01} or considering the discretization of the system of ODE as when solving it numerically, in order to show convergence to a solution the difference between the approximations to the solution at the $N$'th step and the $N+1$'th step have to be found and finally show that the sequence of approximations to the solution at the $N$'th step converges uniformly to a solution. In these methods when the existence of the solution is proven one is not sure about its uniqueness and therefore a uniqueness proof has to be presented separately. In the method described above which we set bounds on the value of the solution the proof of the existence of the solution is not separate from proving the uniqueness of the solution, since in order to demonstrate existence it has to be shown that the bounds set on the solution at the $N$'th step form a nested sequence and approach each other as $N \rightarrow \infty$ which automatically shows uniqueness as well. This implies that this method is only applicable to when the conditions of the theorem are such that we obtain a unique solution (e.g. when $f(t,y)$ in the system of ODE above is Lipschitz), and it cannot be applied to show the existence of a solution only (e.g. it cannot be applied to when $f(t,y)$ is continuous).
\\
\\
\indent In Section \ref{sec:3} we prove Theorem \ref{theorem1.1}. We implement the same idea used in Section \ref{sec:2} and described in the paragraph after Theorem \ref{theorem1.1} to prove this result. A standard domain $S_+$ is defined as
\begin{equation}
\label{eq:1.7}
S_+ \equiv \{ x \in P_1 | \ 0 \leq x_m - x_{0 m} \leq \alpha , \  - \bar{a} + M_{C_l} ( x_m - x_{0 m} ) \leq x_l - x_{0 l}  \leq \bar{a} + m_{C_l} ( x_m - x_{0 m} )  \}
\end{equation}
and the solution is constructed on this domain. $m_{C_l}$ and $M_{C_l}$ refer to a lower and upper bound for $C_{il}$ for $i =1, ..., n$ on $P$, respectively. $\alpha > 0$ is chosen small enough. Similarly an $S_-$ domain can be defined for below the initial condition hyperplane \footnote[3]{A list of equivalent definitions for when constructing the solution on the $S_-$ domain is given in \ref{appen:A}.}. The domain between the initial condition hyperplane at $x_m = x_{0m}$ in $S_+$ and the hyperplane $x_m = x_{0m} + \alpha$ in $S_+$ is divided into $2^N$ equal partitions for $N = 0, \ 1, \ ...$ . The hyperplanes at $x_m = x_{0m} + k \alpha /2^N$ in $S_+$ are denoted by $V^{N,k}$ for $k = 1, ..., 2^N$ and $V^{N,0} \equiv V$. Upper and lower bound functions independent of the assumed solution are defined on $V^{N,k}$: $f^{N,k}_{i,M}: V^{N,k} \rightarrow \mathbb{R}$ and $f^{N,k}_{i,m}: V^{N,k} \rightarrow \mathbb{R}$ such that if $f(x)$ is a solution to \ref{eq:1.1} satisfying the initial condition then
\begin{equation}
\label{eq:1.8}
f^{N,k}_{i,m}(x) \leq f_i(x) \leq f^{N,k}_{i,M}(x) , \ \ \ \ \ \  x \in V^{N,k}
\end{equation}
and $f^{N,0}_{i,m} = f^{N,0}_{i,M} \equiv I_i$. Next in order to find a similar recursion relation as \ref{eq:1.6} for $\Delta f^{N,k} \geq f^{N,k}_{i,M}(x) - f^{N,k}_{i,m}(x) $, $x \in V^{N,k}$ we need to introduce the Lipschitz constants $L^{N,k}$ of $f^{N,k}_{i,M}$ and $f^{N,k}_{i,m}$ and to show that $\Delta f^{N,k} \rightarrow 0$ as $N \rightarrow \infty$ we need to show that these Lipschitz constants are bounded. This is done by finding a recursion relation for the Lipschitz constants in Section \ref{sec:3.1} and showing that they are locally (i.e. close enough to the initial condition hyperplane) bounded in Section \ref{sec:3.2}. The recursion relation for $\Delta f^{N,k}$, $L^{N,k}$ and a bound for the Lipschitz constants $L^{N,k} \leq L^{N,2^N}$ are given by
\begin{flalign}
\label{eq:1.9}
& \Delta f^{N,k} = \Delta f^{N,k-1} \left(1 + C_1 {\alpha \over 2^N} \right) + C_2 \left( {\alpha \over 2^N} \right)^2 \\ \label{eq:1.10}
& L^{N,k} \! \! = \! L^{N,{k-1}} \! \left(1 + \! (m \! - \! 1) L_C {\alpha \over 2^N} \! + \! n L_D {\alpha \over 2^N} \right) \! + \! n(m \! - \! 1)L_C { \alpha \over 2^N} \left( L^{N,{k-1}} \right) ^2 \!\!\! + \! L_D {\alpha \over 2^N} \\ \label{eq:1.11}
& L^{N, 2^N} \leq  {(L_I + 1/n) \exp(c_1 \alpha) \over 1 - n (m-1) L_C \alpha (L_I + 1/n) \exp \left( \theta(c_1) c_1 \alpha \right)} -1/n \equiv L_f 
\end{flalign}
with $\Delta f^{N,0} = 0$ and $L^{N,0} = L_I$. $C_1$ and $C_2$ are bounded constants.
If the locality condition of \ref{eq:1.2} is satisfied, it can be shown that $L^{N,k}$ are bounded for all $N$ and $k$, with their bound given by $L_f$ in relation \ref{eq:1.11}.

In \ref{appen:B} it is shown in detail that the bounds for the solution at the $N+1$ step of partitioning of $S_+$ lie within the bounds of the $N$ step of partitioning. Therefore with denoting the set of possible ranges of values of the solution on $S_+$ at the $N$ step of partitioning by $P^N_+$ we have $P^N_+ \supseteq P^{N+1}_+ \supseteq ...$ and $(x, f(x)) \in P^N_+$ for $x \in S_+$. Solving the recursion relation of \ref{eq:1.9} for $\Delta f^{N,k}$ we find $\Delta f^{N,k} \sim {1 / 2^N}$ hence $P^N_+$ converges to the graph of a unique function for the solution ($Ufs$) as $N \rightarrow \infty$.

Finally in Section \ref{sec:3.3} it is shown that the $Ufs$ obtained in the previous Subsections solves the system of PDE of Theorem \ref{theorem1.1} at its differentiable points subject to the initial condition. When the coefficients $C_{il}$, $D_i$ and the initial condition $I_i$ are $C^1$ in order to prove that $Ufs$ is $C^1$ on the hyperplanes $V^{N,k}$ the following functions are defined recursively
\begin{flalign}
\label{eq:1.12}
& f^{N,k}_i (x) = f^{N,k-1}_i \left( x - C_i \left( x^{\nu} , f^{N,k-1}(x^{\nu}) \right) \alpha/ 2^N \right) + D_i \left( x^{\nu} , f^{N,k-1}(x^{\nu}) \right) \alpha /2^N \\
& f^{N,0}_i \equiv I_i , \ x \in V^{N,k} , \ x^{\nu}  = x - \nu {\alpha \over 2^N} , \ \nu = \left( m_{C_l} + M_{C_l} \right) \hat{e}_l /2 + \hat{e}_m , C_i = (C_{i1}, ...,C_{im-1}, 1 ) \nonumber
\end{flalign}
the functions $f^{N,k}_i (x)$ are defined such that $f^{N,k}_{i,m}(x) \leq f^{N,k}_i (x) \leq f^{N,k}_{i,M}(x)$ for $x \in V^{N,k}$. A fixed $V^{N, k_N}$ is considered for $k_N \in \{ 1, ..., 2^N \}$ and $q = k_N / 2^N$ held fixed as $N \rightarrow \infty$. Based on the discussion above it is clear that the sequence of functions $f^{N,k_N}_i (x)$ converges uniformly to $Ufs(x)$ on $V^{N,k_N}$, furthermore it is shown that the sequence of their partial derivatives $\partial f^{N,k_N}_i / \partial x_l$ is bounded and equicontinuous, therefore there is a subsequence of their partial derivatives that converges uniformly. From this it is concluded that $Ufs(x)$ is $C^1$ on $V^{N,k_N}$, this is then easily generalized to all hyperplanes parallel to the initial condition hyperplane in $S_+$. Based on this fact it is then shown that $Ufs$ solves the system of PDE of \ref{eq:1.1} subject to the initial condition and is $C^1$ on $S_+$.

Note that relation \ref{eq:1.12} can be used to solve the system of PDE of \ref{eq:1.1} numerically on $S_+$. One might attempt to show that the discretized functions in \ref{eq:1.12} converge to the solution of the PDE of Theorem \ref{theorem1.1}. In this case one has to evaluate the difference between $f^{N,k}_i(x)$ and $f^{N+1,2k}_i(x)$ and show that this difference is of order $1/2^N$ uniformly on $V^{N,k}$ for $k =1, ..., 2^N$, this is also a possibility, however as mentioned earlier in the approach which we set bounds on the values of the solution things are more under control, therefore it is a more convenient and reliable method hence this will be the approach we consider in this work.

Section \ref{sec:4} discusses the generalizations and application of Theorem \ref{theorem1.1}. In Subsection \ref{sec:4.1} it is shown that the Lipschitz or $C^r$ dependence of the initial condition and coefficients $C_{il}$ and $D_i$ on parameters is inherited to the solution, Subsection \ref{sec:4.2} discusses the generalization of Theorem \ref{theorem1.1} to non-linear systems of PDE and in Subsection \ref{sec:4.3} the application of Theorem \ref{theorem1.1} in regard to quasilinear hyperbolic first order systems of PDE in two independent variables is briefly discussed.
\\
\\
\indent The generalization of Picard-Lindelof theorem/ the method of characteristics to systems of PDE is a result concerning the classical theory of partial differential equations which has remained unknown in the past centuries. As far as the author is concerned this result, in the form stated in Theorem \ref{theorem1.1} with minimal differentiability assumptions and explicit expressions for the locality condition and the Lipschitz constant of the solution, is not approachable using known methods or theorems and the only way is by direct construction of the solution. Here our main focus will be on proving this result and briefly discuss some of its generalizations and application but leave further investigations for future works.

\section{An alternative proof of the Picard-Lindelof theorem of ODE}
\label{sec:2}
In this Section we demonstrate the main idea in proving Theorem \ref{theorem1.1} in the simpler context of ordinary differential equations. Consider Picard-Lindelof theorem \footnote[4]{We make use of the maximum or infinity norm: $\Vert x \Vert_\infty = \underset{t}{\max} \ |x_t| $ and the 1-norm: $\Vert x \Vert_1 = \sum_t |x_t| $ throughout the paper.}:
\begin{theorem}[Picard-Lindelof theorem]
\label{theorem2.1}
Let $y, f \in \mathbb{R}^n$; $f(t,y)$ continuous on a parallelepiped
$R:  -a \leq t - t_0 \leq a,  \left\Vert y - y_0 \right\Vert_{\infty}  \leq b $ and Lipschitz continuous with respect to $y$. Let $M_{\Vert f \Vert}$ be a bound for $\Vert f(t, y) \Vert_{\infty}$ on R; $\alpha = \min \{ a, b/M_{\Vert f \Vert} \}$. Then
\begin{equation}
\label{eq:2.1}
y' = f(t,y), \ \ \ \ y(t_0) = y_0
\end{equation}
has a unique solution $y = y(t)$ on $[t_0 - \alpha, t_0 + \alpha]$.
\end{theorem}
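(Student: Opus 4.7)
The plan is to implement the bound-and-bisect construction sketched in relations \ref{eq:1.5}--\ref{eq:1.6} of the introduction, focusing on $[t_0,t_0+\alpha]$ since the other half follows by time reversal. For each $N\geq 0$ set $\delta t=\alpha/2^N$ and $t^{N,k}=t_0+k\,\delta t$, $k=0,\ldots,2^N$. Starting from $y^{N,0}_{i,M}=y^{N,0}_{i,m}=y_{0i}$ I would define, recursively,
\begin{equation*}
y^{N,k}_{i,M}=y^{N,k-1}_{i,M}+\delta t\sup_{(t,y)\in B^{N,k-1}} f_i(t,y),\qquad y^{N,k}_{i,m}=y^{N,k-1}_{i,m}+\delta t\inf_{(t,y)\in B^{N,k-1}} f_i(t,y),
\end{equation*}
where $B^{N,k-1}$ is the enlarged box $[t^{N,k-1},t^{N,k}]\times\prod_{i}[y^{N,k-1}_{i,m}-M_{\Vert f\Vert}\delta t,\ y^{N,k-1}_{i,M}+M_{\Vert f\Vert}\delta t]$ intersected with $R$. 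Since $\alpha\leq b/M_{\Vert f\Vert}$ the whole construction stays inside $R$, and the fundamental theorem of calculus confines any solution of \ref{eq:2.1} to $B^{N,k-1}$ on the $k$-th subinterval, so induction on $k$ yields the enclosure \ref{eq:1.5}.

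To derive \ref{eq:1.6} for $\Delta y^{N,k}=\max_i(y^{N,k}_{i,M}-y^{N,k}_{i,m})$ I would bound $\sup_{B^{N,k-1}} f_i-\inf_{B^{N,k-1}} f_i$ by a Lipschitz-in-$y$ contribution $nL_f(\Delta y^{N,k-1}+2M_{\Vert f\Vert}\delta t)$ (the factor $n$ arising from conversion between the max and $1$-norms used by the paper) plus the modulus of uniform continuity $\omega_f(\delta t)$ of $f$ in $t$ on the compact set $R$. Multiplying by $\delta t$ and regrouping reproduces \ref{eq:1.6} with $C=2nL_f M_{\Vert f\Vert}$ and $\epsilon=\omega_f(\delta t)\to 0$. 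Iterating the resulting linear recursion from $\Delta y^{N,0}=0$ gives
\begin{equation*}
\Delta y^{N,2^N}\leq \bigl(C\,\delta t+\omega_f(\delta t)\bigr)\frac{e^{nL_f\alpha}-1}{nL_f},
\end{equation*}
which tends to $0$ as $N\to\infty$.

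Next I would prove the nesting $[y^{N+1,2k}_{i,m},y^{N+1,2k}_{i,M}]\subseteq[y^{N,k}_{i,m},y^{N,k}_{i,M}]$ by induction on $k$. The two boxes $B^{N+1,2k-2}$ and $B^{N+1,2k-1}$ used by the $(N{+}1)$-scheme both lie inside the single box $B^{N,k-1}$ used by the $N$-scheme (the half-step excursion $M_{\Vert f\Vert}\delta t/2$ is precisely half of the full-step excursion $M_{\Vert f\Vert}\delta t$), so each of the two half-step suprema is dominated by $\sup_{B^{N,k-1}} f_i$ and the two half-step contributions together do not exceed the single $N$-step contribution. Consequently the graphs of the bound sequences form a nested sequence whose widths shrink uniformly to $0$, and their intersection is the graph of a single continuous function $y_\ast$ on $[t_0,t_0+\alpha]$. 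Any solution of \ref{eq:2.1} lies in every member of this sequence and must therefore equal $y_\ast$, which yields uniqueness at the same time as existence of a candidate.

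It remains to verify that $y_\ast$ actually solves \ref{eq:2.1}. Telescoping the bound recursion gives $y^{N,k}_{i,M}=y_{0i}+\delta t\sum_{j=0}^{k-1}\sup_{B^{N,j}} f_i$ and a similar sum for the lower bound; by uniform continuity of $f$ on $R$ and uniform convergence of the piecewise-linear interpolants of the bound sequences to $y_\ast$, both Riemann-type sums converge to $\int_{t_0}^{t} f_i(s,y_\ast(s))\,ds$, so $y_\ast(t)=y_0+\int_{t_0}^{t} f(s,y_\ast(s))\,ds$, whence continuity of $f$ gives differentiability and the ODE. The symmetric construction on $[t_0-\alpha,t_0]$ completes the solution. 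I expect the main technical difficulty to lie in the nesting step and in keeping the $\delta t^2$ and $\omega_f(\delta t)\delta t$ error terms properly accounted for under the Gronwall-type amplification by $2^N$; in the ODE setting the factor $(1+nL_f\delta t)^{2^N}$ stays bounded by $e^{nL_f\alpha}$ unconditionally, which is why no smallness beyond the standard $\alpha\leq b/M_{\Vert f\Vert}$ is needed here, in contrast to the PDE case of Theorem \ref{theorem1.1} where the additional locality condition \ref{eq:1.2} must be imposed.
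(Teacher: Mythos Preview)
Your proposal is correct and follows essentially the same bound-and-bisect construction as the paper: the boxes $B^{N,k-1}$ are precisely the paper's regions $R^{N,k}$, the recursion and constants match, and your nesting argument is the same induction the paper carries out in its footnote. The only noticeable difference is in the final verification that the limit function $y_\ast$ solves the ODE: the paper argues directly with a difference quotient (showing $Ufs_i(t+\Delta t)-Ufs_i(t)-\Delta t\,f_i(t,Ufs(t))=O(\Delta t^2)+\epsilon\,O(\Delta t)$ by applying the $N=0$, $k=1$ version of the recursion with $Ufs(t)$ as initial data), whereas you telescope the bound recursion into a Riemann-type sum and pass to the integral equation; both routes are standard and equally short.
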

The standard proofs of this theorem are textbook material \cite{01}. Here we present an alternative way to prove this theorem.
\begin{proof}[Alternative proof of Picard-Lindelof theorem]
\onehalfspacing
Lets assume the system of ODE \ref{eq:2.1} has a solution. We can integrate \ref{eq:2.1} for this solution to obtain
\begin{equation}
\label{eq:2.2}
y_i(t) = y_{0i} + \int^{t}_{t_0} f_i(\bar{t} ,y(\bar{t})) d\bar{t}, \ \ \ \ y(t_0) = y_0
\end{equation}
to first approximation the maximum and minimum values of this solution at $t = t_0 + \alpha$ are given by
\begin{equation}
\label{eq:2.3}
y_{i,m}^{0,1} \equiv y_{0i} + \alpha m_{f_i}^{0,1} \leq y_i(t_0 + \alpha) \leq y_{0i} + \alpha M_{f_i}^{0,1} \equiv y_{i,M}^{0,1} , \ \ \  i = 1, ..., n
\end{equation}
where $M_{f_i}^{0,1}$ and $m_{f_i}^{0,1}$ denote the maximum and minimum values of $f_i(t,y)$ in the region $R^{0,1} \equiv \left\{ (t,y) \left| 0 \leq t - t_0 \leq \alpha,  \Vert y - y_0 \Vert_{\infty} \leq M_{\Vert f \Vert} \alpha \right. \right\} $. Next we divide the interval $[t_0, t_0 + \alpha]$ in half. The maximum and minimum values of the solution at $t = t_0 + \alpha /2$ are given by
\begin{equation}
\label{eq:2.4}
y_{i,m}^{1,1} \equiv y_{0i} + m_{f_i}^{1,1} \alpha/2 \leq y_i(t_0 + \alpha/2) \leq y_{0i} + M_{f_i}^{1,1} \alpha/2 \equiv y_{i,M}^{1,1}
\end{equation}
where $M_{f_i}^{1,1}$ and $m_{f_i}^{1,1}$ are the maximum and minimum values of $f_i(t,y)$ in $R^{1,1} \equiv \{ (t,y) | 0 \leq t - t_0 \leq \alpha/2,  \Vert y - y_0 \Vert_{\infty} \leq M_{\Vert f \Vert} \alpha/2 \}$, respectively. Now we use the bounds in \eqref{eq:2.4} for the possible range of the solution at $t = t_0 + \alpha /2$ as a range of possible initial conditions at $t = t_0 + \alpha /2$ to find a better range of values for the solution at $t = t_0 + \alpha$. This is given by
\begin{equation}
\label{eq:2.5}
y_{i, m}^{1,2} \equiv y_{i,m}^{1,1} + m_{f_i}^{1,2} \alpha/2 \leq y_i(t_0 + \alpha) \leq y_{i,M}^{1,1} + M_{f_i}^{1,2} \alpha/2 \equiv y_{i, M}^{1,2}
\end{equation}
where $M_{f_i}^{1,2}$ and $m_{f_i}^{1,2}$ are the maximum and minimum values of $f_i(t,y)$ in $R^{1,2} \equiv \{ (t,y) \vert  \alpha/2 \leq t - t_0 \leq \alpha , \ y_{i, m}^{1,1} - M_{\Vert f \Vert}\alpha/2 \leq y_i \leq y_{i, M}^{1,1} + M_{\Vert f \Vert}\alpha/2  \}$, respectively. We continue this process by dividing the interval $[t_0, t_0 + \alpha]$ into $2^N$ equal intervals for $N = 0, 1, 2, ... $ and set bounds on the solution at $t = t_0 + k \alpha /2^N$ for $k =1, ..., 2^N$
\begin{equation}
\label{eq:2.5.1}
\begin{split}
& y_{i,m}^{N,k} \equiv y_{i,m}^{N,k-1} + m_{f_i}^{N,k} {\alpha / 2^N} \leq y_i(t_0 + k {\alpha / 2^N}) \leq y_{i, M}^{N,k-1} + M_{f_i}^{N,k} {\alpha / 2^N} \equiv y_{i, M}^{N,k} \\
& R^{N,k} \equiv \left\{ (t,y) \left|  (k-1) {\alpha \over 2^N} \leq t - t_0 \leq k {\alpha \over 2^N},   y^{N,k-1}_{i,m} - M_{\Vert f \Vert} {\alpha \over 2^N} \leq y_i \leq y^{N,k-1}_{i, M} + M_{\Vert f \Vert} {\alpha \over 2^N} \right. \right\}
\end{split}
\end{equation} 
 with $y_{i,m}^{N,0} = y_{i, M}^{N,0} = y_{0i}$ and $M_{f_i}^{N,k}$ and $m_{f_i}^{N,k}$ denoting the maximum and minimum values of $f_i(t,y)$ in $R^{N,k}$, respectively. From \ref{eq:2.5.1} it can be verified that the bounds for the solution at the $N+1$ step of the partitioning lie within the bounds at the $N$ step of the partitioning \footnote[5]{This can be seen as follows, with assuming $y_{i, m}^{N+1,2k-2} \geq y_{i, m}^{N,k-1}$, $y_{i, M}^{N+1,2k-2} \leq y_{i, M}^{N,k-1}$ (note that this is true for $k-1 = 0$) we have to show $y_{i, m}^{N+1,2k} \geq y_{i, m}^{N,k}$, $y_{i, M}^{N+1,2k} \leq y_{i, M}^{N,k}$,
\begin{equation}
\label{eq:2.5.2}
\begin{split}
 y_{i, M}^{N+1,2k} = y_{i, M}^{N+1,2k-1} + M^{N+1,2k}_{f_i} {\alpha \over 2^{N+1}} = y_{i, M}^{N+1,2k-2} + {1 \over 2} \left( M^{N+1,2k-1}_{f_i} + M^{N+1,2k}_{f_i} \right) {\alpha \over 2^{N}}
\end{split} 
\end{equation}
$M^{N+1,2k-1}_{f_i} \leq M^{N,k}_{f_i}$, $M^{N+1,2k}_{f_i} \leq M^{N,k}_{f_i}$ since $R^{N+1,2k-1} \subset R^{N,k}$ and $R^{N+1,2k} \subset R^{N,k}$ and by assumption $y_{i, M}^{N+1,2k-2} \leq y_{i, M}^{N,k-1}$, therefore this proves $y_{i, M}^{N+1,2k} \leq y_{i, M}^{N,k}$, the proof of $y_{i,m}^{N+1,2k} \geq y_{i,m}^{N,k}$ is similar.

It is clear that $R^{N+1,2k-1} \subset R^{N,k}$ since by assumption $y^{N+1,2k-2}_{i,m} \geq y^{N,k-1}_{i,m}$ and $y^{N+1,2k-2}_{i, M} \leq y^{N,k-1}_{i, M}$ and $R^{N+1,2k} \subset R^{N,k}$ since $y^{N+1,2k-1}_{i,m} = y^{N+1,2k-2}_{i, m} + m^{N+1,2k-1}_{f_i} \alpha /2^{N+1} \geq y^{N,k-1}_{i,m} - M_{\Vert f \Vert} \alpha/2^{N+1}$ and similarly $y^{N+1,2k-1}_{i,M} = y^{N+1,2k-2}_{i,M} + M^{N+1,2k-1}_{f_i} \alpha /2^{N+1} \leq y^{N,k-1}_{i,M} + M_{\Vert f \Vert} \alpha/2^{N+1}$ hence their $y_i$ range is a subset of the $y_i$ range of $R^{N,k}$ and their $t$ range is also clearly a subset of the $t$ range of $R^{N,k}$.}, therefore with defining $R^N \equiv \cup^{2^N}_{k=1} R^{N,k}$ we have, $R^N \supseteq R^{N+1} \supseteq R^{N+2} \supseteq ...$ and clearly based on how $R^N$ is defined we have $(t, y(t)) \in R^N$ for $t \in [t_0, t_0 + \alpha]$, hence if we show that as $N \rightarrow \infty$ , $y_{i,M}^{N,k} - y_{i,m}^{N,k} \rightarrow 0$ for $k = 1, ..., 2^N$ it can be concluded that the regions $R^N$ will shrink to a graph of a unique function for the solution to \ref{eq:2.1}. To show this consider the following recursion relation
\begin{equation}
\label{eq:2.6}
y_{i,M}^{N,k} - y_{i,m}^{N,k} = y_{i,M}^{N,k-1} - y_{i,m}^{N,k-1} + \left( M_{f_i}^{N,k} - m_{f_i}^{N,k} \right)\alpha/2^N
\end{equation}
by assumption the function $f$ satisfies the Lipschitz condition in its $y$ coordinates and being a continuous function defined on the compact region $R^{N,k}$ it assumes its maximum and minimum values $M_{f_i}^{N,k}$ and $m_{f_i}^{N,k}$ at certain points in $R^{N,k}$ therefore we have
\begin{equation}
\label{eq:2.7}
M_{f_i}^{N,k} - m_{f_i}^{N,k} \leq \sum_i \left\{ y_{i, M}^{N,k-1} + M_{\Vert f \Vert} \alpha/2^N - \left( y_{i, m}^{N,k-1} - M_{\Vert f \Vert} \alpha/2^N \right) \right\} L_f + \epsilon
\end{equation}
with $L_f$ being the Lipschitz constant of the function $f(t,y)$ with respect to $y$. Since the function $f(t,y)$ is continuous and it is defined on a compact set it is uniformly continuous therefore for any $\epsilon > 0$ there is a $\delta > 0$ (independent of $y$) such that if $|t-t'| < \delta$, $|f_i(t, y) - f_i(t', y)| < \epsilon$. Now we can choose $N$ large enough such that $\alpha / 2^N < \delta$. This defines the $\epsilon$ used in relation \ref{eq:2.7}. Using \ref{eq:2.7} we can derive an upperbound for \ref{eq:2.6}
\begin{equation}
\label{eq:2.8}
y_{i, M}^{N,k} - y_{i, m}^{N,k} \leq \Delta y^{N,k-1} + \sum_i \left\{ \Delta y^{N,k-1} L_f \alpha/2^N  + L_f M_{\Vert f \Vert} \alpha/2^{N-1} \alpha/2^N \right\} + \epsilon \alpha /2^N \equiv \Delta y^{N,k}
\end{equation}
with $y_{i, M}^{N,k-1} - y_{i, m}^{N,k-1} \leq \Delta y^{N,k-1} $. From \ref{eq:2.8} we have
\begin{equation}
\label{eq:2.9}
\Delta y^{N,k} = \Delta y^{N,k-1}( 1 + nL_f \delta t)  + C \delta t ^2 + \epsilon \delta t
\end{equation}
with $C \equiv 2n L_f M_{\Vert f \Vert}$ and $\delta t \equiv \alpha / 2^N$.  Solving \ref{eq:2.9} with noting that $\Delta y^{N,0} = 0$ we find
\begin{flalign}
\label{eq:2.10}
\Delta y^{N,k} & \! = \! ( C \delta t ^2 \! + \epsilon \delta t ) \{ 1 \! + \! ( 1 \! + \! n L_f \delta t) \! + \! ... \! + \! ( 1 + n L_f \delta t)^{k-1} \} \! = \! (C \delta t + \epsilon ) \{ ( 1 + n L_f \delta t)^k -1 \} / (nL_f) \nonumber \\ & \leq (C \delta t + \epsilon ) \left\{ \exp \left( n L_f \alpha {k / 2^N} \right) -1 \right\} / (nL_f)
\end{flalign}
From \ref{eq:2.10} it can be easily seen that as $N \rightarrow \infty$, $\Delta y^{N,k} \rightarrow 0$ for any $k = 1, ..., 2^N$ hence $R^N$ converges to a graph of a unique function for the solution ($Ufs$) on $[t_0, t_0 + \alpha]$. It can be shown that $Ufs$ indeed solves \ref{eq:2.1}:
\begin{flalign}
\label{eq:2.11}
Ufs_i(t + \Delta t) & - Ufs_i(t) = Ufs_i ( t + \Delta t ) - ( Ufs_i(t) + \Delta t f_i (t, Ufs(t))) + \Delta t f_i (t, Ufs(t)) \nonumber \\ & = O ({\Delta t}^2) + \epsilon O ( \Delta t ) + \Delta t f_i (t, Ufs(t))  \Longrightarrow Ufs'(t) = f(t, Ufs(t)))
\end{flalign}
with $\epsilon \rightarrow 0$ as $\Delta t \rightarrow 0$. The second equality follows from \ref{eq:2.9}, for $N=0$, $k = 1$, $\delta t = | \Delta t |$ \footnote[6]{$\Delta t$ can also be considered negative. Although relation \ref{eq:2.9} was derived by assuming we are moving in the positive time direction, clearly it is equivalently valid for when moving in the negative time direction (e.g. for when constructing the solution on $[-\alpha+ t_0, t_0]$ with $\delta t = \alpha /2^N > 0$ ). }, $\Delta y^{0,0} = 0$, with considering $y_0 = Ufs(t)$ as the initial condition at $t \in [0, \alpha]$ and noting that $y_{i,m}^{0,1} \leq Ufs_i(t + \Delta t) \leq y_{i,M}^{0,1}$ and $y_{i,m}^{0,1} \leq Ufs_i(t) + \Delta t f_i(t, Ufs(t)) \leq y_{i,M}^{0,1}$. It is clear that with a similar procedure we can construct a unique solution on $[- \alpha + t_0, t_0 ]$. \qed
\end{proof}

\section{A generalization of Picard-Lindelof theorem/ the method of characteristics to systems of PDE}
\label{sec:3}
In this Section we will apply the idea used in the previous Section for proving the Picard-Lindelof theorem to prove the theorem below.

\begin{theorem}[A generalization of Picard-Lindelof theorem/ the method of characteristics to systems of PDE]
\label{theorem3.1}
\onehalfspacing
Let $C_{il}, D_i : P \rightarrow \mathbb{R} , \ i = 1, ..., n , l =1, ..., m-1$, $m \geq 2$, be Lipschitz continuous or $C^r$ $( r\geq 1)$ functions defined on the parallelpiped $P \equiv P_1 \times P_2$ with $P_1 \equiv \{ x \in \mathbb{R}^m | \left\Vert x - x_0 \right\Vert_\infty \leq a, x_0 \in \mathbb{R}^m \}$ and $ P_2 \equiv \{ y \in \mathbb{R}^n |  \left\Vert y - y_0 \right\Vert_\infty \leq b, y_0 \in \mathbb{R}^n \}$. And let the Lipschitz continuous or $C^r$ initial condition function $I : V \rightarrow P_2$ for $V \equiv \{ x \in P_1 \vert x_m = x_{0m} , |x_l - x_{0l}| \leq \bar{a} \}$, $0 < \bar{a} \leq a$ and $M_{\left\Vert I-y_0 \right\Vert} < b$ with $M_{\left\Vert I-y_0 \right\Vert} \equiv \max \{\left\Vert I(u) - y_0 \right\Vert_\infty \vert u \in V \}$ be given. The following system of partial differential equations
\begin{flalign}
\label{eq:3.1}
C_{i1}(x,y) {\partial y_i \over \partial x_1} + ... + C_{im-1}(x,y) {\partial y_i \over \partial x_{m-1}} + {\partial y_i \over \partial x_m} = D_i(x,y)
\end{flalign}
has a unique Lipschitz continuous \footnote[7]{By Lipschitz continuous solution we mean a Lipschitz continuous function that solves the system of PDE \ref{eq:3.1} at its differentiable points. By Rademacher theorem a Lipschitz continuous function is differentiable almost everywhere.} or $C^r$ solution respectively, $f : B \rightarrow P_2$ for $V \subset B \subseteq P_1$, $B$ containing a neighbourhood of $V_{int}$, with $V_{int} \equiv \{ x \in P_1 \vert x_m = x_{0m} , |x_l - x_{0l}| < \bar{a} \}$ and $f$ reducing to the initial condition function $I$ on $V$, $f(u) = I(u)$ for $u \in V $.
\end{theorem}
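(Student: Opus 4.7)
The plan is to mimic the strategy of Section \ref{sec:2} in the PDE setting. I would first restrict attention to the standard domain $S_+$ of \eqref{eq:1.7}, whose $x_l$ cross section at height $x_m$ shrinks (or widens) at rates dictated by the bounds $m_{C_l}, M_{C_l}$; this plays the role of $[t_0,t_0+\alpha]$ in the ODE argument, and the symmetric $S_-$ will handle the region below. Partition the $x_m$ interval $[x_{0m},x_{0m}+\alpha]$ into $2^N$ equal slabs with separating hyperplanes $V^{N,k}$, and on each $V^{N,k}$ define, independently of any assumed solution, two envelope functions $f^{N,k}_{i,M},f^{N,k}_{i,m}:V^{N,k}\to\mathbb{R}$ satisfying \eqref{eq:1.8} for every Lipschitz solution $f$.

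The construction of these envelopes is the technical heart of the argument. Given the envelopes at level $k-1$, the envelope values at $x\in V^{N,k}$ should be obtained by tracing back along the candidate characteristic direction $(C_{i1},\ldots,C_{im-1},1)$ by a step $\alpha/2^N$ and taking the supremum (respectively infimum) over all $y$ values consistent with the level $k-1$ envelope together with the $D_i$ correction. The worst-case choice over the admissible characteristic directions of each equation is what replaces the single ODE characteristic and sidesteps the usual obstruction to reducing the system to ODEs. The two objects to control are then the pointwise gap $\Delta f^{N,k} \geq f^{N,k}_{i,M}(x)-f^{N,k}_{i,m}(x)$ and the Lipschitz constant $L^{N,k}$ of the envelopes along $V^{N,k}$; a careful bookkeeping using $L_C,L_D,M_{\Vert C\Vert},M_{\Vert D\Vert}$ should then yield the recursions \eqref{eq:1.9} and \eqref{eq:1.10}.

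The main obstacle will be controlling $L^{N,k}$ uniformly in $N$ and $k$, because \eqref{eq:1.10} is quadratic in $L^{N,k-1}$ and naive iteration blows up. I would extract the locality condition \eqref{eq:1.2} and the explicit bound \eqref{eq:1.11} by treating \eqref{eq:1.10} as a discrete Riccati inequality and dominating it by the continuous ODE
\[
\frac{dL}{dx_m} = \bigl((m-1)L_C + nL_D\bigr)L + n(m-1)L_C\, L^2 + L_D,
\]
solving that Riccati ODE explicitly with $L(x_{0m}) = L_I$, and identifying $\alpha$ in \eqref{eq:1.2} as a safe fraction of the continuous blow-up time. Once $L^{N,k} \leq L_f$ is in hand, the linear recursion \eqref{eq:1.9} has bounded coefficients and can be solved exactly as in \eqref{eq:2.10}, giving $\Delta f^{N,k}\sim 1/2^N$. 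Combined with the nested region property $P^N_+ \supseteq P^{N+1}_+$, verified by an inductive argument analogous to footnote~5 of Section~\ref{sec:2}, this forces the envelopes to collapse to a unique function $Ufs$ on $S_+$.

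To finish, I would verify that $Ufs$ solves \eqref{eq:3.1} at its differentiable points by the device of \eqref{eq:2.11}: for small $\delta$ in the direction $\nu_i = (C_{i1}(x,Ufs(x)),\ldots,C_{im-1}(x,Ufs(x)),1)$, both $Ufs_i(x+\delta\nu_i)$ and $Ufs_i(x)+\delta D_i(x,Ufs(x))$ lie in a one-step envelope of width $O(\delta^2)+\epsilon(\delta)\delta$, forcing the directional derivative of $Ufs_i$ along $\nu_i$ to equal $D_i$, which is \eqref{eq:3.1}. For the $C^r$ case I would work with the explicit recursive functions \eqref{eq:1.12}, which already lie inside the envelopes by construction: induction on $k$ shows they are $C^r$ in $x$, and one then establishes uniform bounds and equicontinuity of their derivatives on a fixed $V^{N,k_N}$ with $k_N/2^N = q$ held fixed, so that Arzel\`a--Ascoli extracts a subsequence whose derivatives converge uniformly. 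Identifying the limit with the derivatives of $Ufs$ promotes $Ufs$ to $C^r$ on each hyperplane and, via \eqref{eq:3.1} itself, to all of $S_+$; the symmetric construction on $S_-$ and gluing along $V$ then complete the proof.
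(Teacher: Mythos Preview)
Your proposal follows the paper's proof essentially step for step: the standard domain $S_+$, the dyadic partition into hyperplanes $V^{N,k}$, the recursive envelope functions $f^{N,k}_{i,M},f^{N,k}_{i,m}$ built from worst-case characteristic backtracking, the coupled recursions \eqref{eq:1.9}--\eqref{eq:1.10} for $\Delta f^{N,k}$ and $L^{N,k}$, the nested-region argument $P^N_+\supseteq P^{N+1}_+$, the verification that $Ufs$ solves \eqref{eq:3.1} via a one-step $O(\delta^2)$ comparison, and the Arzel\`a--Ascoli upgrade to $C^r$ through the explicit iterates \eqref{eq:1.12}. All of this matches the paper's Sections~\ref{sec:3.1}--\ref{sec:3.3} and Appendix~\ref{appen:B}.

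The one substantive deviation is your treatment of the quadratic recursion \eqref{eq:1.10}. You propose to dominate the discrete iteration by the continuous Riccati ODE $L' = ((m-1)L_C+nL_D)L + n(m-1)L_C L^2 + L_D$ and read off the blow-up time; this is clean and, since the right-hand side is increasing in $L$, an explicit-Euler comparison shows the discrete sequence stays below the continuous solution, so it works. The paper instead makes the substitution $b_k = n(m-1)L_C(\alpha/2^N)(L^{N,k}+1/n)$ to reduce \eqref{eq:1.10} to $b_k = \gamma b_{k-1}+b_{k-1}^2$, expands $b_k$ as a polynomial $\sum_h C^h_k(\gamma)b_0^h$, proves the coefficient bounds $C^h_k\le k^{h-1}\gamma^{kh}$ (resp.\ $k^{h-1}\gamma^{k-1}$) by induction on $h$, and sums the resulting geometric series. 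Your route is shorter and more conceptual; the paper's route is more hands-on but yields the explicit constants in \eqref{eq:1.2}--\eqref{eq:1.3} directly from the series bound without solving the Riccati equation. Either way one arrives at the same locality condition and the same $L_f$.
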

\begin{proof}
\onehalfspacing
In a similar approach as the alternative proof of the Picard-Lindelof theorem presented in the previous Section we assume a solution exists and find bounds for this solution by dividing the domain along the $x_m$ direction into equal partitions and later show that these bounds approach each other as the number of partitions goes to infinity.

First we define a standard domain to construct the solution on. Let $M_{\Vert D \Vert}$ be a bound for $|D_i|$ on $P$ and $ \bar{\alpha} = \min \{ a, (b - M_{\left\Vert I-y_0 \right\Vert})/M_{\Vert D \Vert} \} $. Let $M_{C_l}$ and $m_{C_l}$ denote an upper and lower bound for $C_{il}$ for $i = 1, ..., n$ on $P$, respectively. We define the plus standard domain
\begin{equation}
\label{eq:3.3}
S_+ \equiv \{ x \in P_1 | \ 0 \leq x_m - x_{0 m} \leq \alpha , \  - \bar{a} + M_{C_l} ( x_m - x_{0 m} ) \leq  x_l - x_{0 l} \leq \bar{a} + m_{C_l} ( x_m - x_{0 m} )  \}
\end{equation}
with $\alpha > 0$ chosen sufficiently small as to satisfy the following conditions: i) a locality criteria (the first relation of \ref{eq:3.47}) to be derived in Subsection \ref{sec:3.2}, ii) $\alpha \leq \bar{\alpha}$, iii) to ensure the inequalities for $x_l$ in the definition of \ref{eq:3.3} are satisfied. Similarly an $S_-$ domain can be defined for below the hyperplane $V$ \footnote[8]{A list of equivalent definitions for when constructing the solution on the $S_-$ domain can be found in \ref{appen:A}.}. The standard domain $S_+$ is defined in a way as to ensure the following two properties. If a solution $f$ to \ref{eq:3.1} on $S_+$ exists satisfying the initial condition then: \\
i) $f (S_+) \subseteq P_2$.
\\
ii) Each characteristic curve $x^{(i)}$ of this solution lies within $S_+$ and connects with a point in the initial condition domain $V$.

In what follows we will construct a unique solution to \ref{eq:3.1} on $S_+$ that satisfies the initial condition. We will be using lots of notations and definitions. For a $p \in S_+$ after integrating \ref{eq:3.1} based on an assumed solution $f$ on $S_+$ that satisfies the initial condition we obtain the following integral and characteristic equations:
\begin{equation}
\label{eq:3.2}
\begin{split}
& f_i (p) = f_i (p^{(i)}_0) + \int^{p_m }_{x_{0m}} D_i(x^{(i)}(t) ,f(x^{(i)}(t))) dt, \ x^{(i)}(x_{0m}) = p^{(i)}_0 \in V, \  x^{(i)}(p_m) = p  \\
& {d x^{(i)}_j(t) \over dt } = C_{ij}(x^{(i)}(t),f(x^{(i)}(t))) , \ \ C_{i m} \equiv 1, \ \ j = 1, ..., m
\end{split}
\end{equation}
note that the parameter of the characteristic equations t, is the same as the $x_m$ coordinate. Next we divide $S_+$ along the $x_m$ direction into $2^N$ for $N = 0 ,1 , ... $ equal partitions and find upper and lower bounds for the value of the assumed solution $f$ at the intersection of these partitions in $S_+$, we have
\begin{equation}
\label{eq:3.4}
\begin{split}
& f^{N, k}_{i, m}(x) \equiv f^{N, k-1}_{i, m,V_{\text{res.}}}(x) + m^{N,k}_{D_i}(x) {\alpha \over 2^N} \leq f_i(x) \leq f^{N, k-1}_{i, M,V_{\text{res.}}}(x) + M^{N,k}_{D_i}(x) {\alpha \over 2^N} \equiv f^{N, k}_{i, M}(x) \\ & x \in V^{N,k}, V^{N,k} \equiv \{ z \in S_+ \vert z_m = x_{0m} + k \alpha / 2^N \}, \ V^{N,0} \equiv V, \ k =1, ...,2^N
\end{split}
\end{equation}
$f^{N, 0}_{i, M}(z) = f^{N, 0}_{i, m}(z) \equiv I_i(z)$ for $z \in V$. $f^{N, k}_{i, M}, f^{N, k}_{i, m}: V^{N,k} \rightarrow \mathbb{R} $ form upper and lower bounds for the value of the solution on $V^{N,k}$. $f^{N, k-1}_{i, M, V_{\text{res.}}}(x)$, $f^{N, k-1}_{i, m, V_{\text{res.}}}(x)$, $M^{N,k}_{D_i}(x)$ and $m^{N,k}_{D_i}(x)$ for $x \in V^{N,k}$ will be defined below. The bounds of relation \ref{eq:3.4} can be understood better in terms of the first relation of \ref{eq:3.2}. Writing this relation for $x \in V^{N,k}$ as the final point and $x^i \in V^{N,k-1}$ as the initial point we have
\begin{flalign}
\label{eq:3.4'}
f_i (x) = f_i (x^i ) + \int^{x_{0m} + k \alpha/2^N }_{x_{0m} + (k-1) \alpha/2^N} D_i(x^{(i)}(t) ,f(x^{(i)}(t))) dt, \ \ \  x^{(i)}(x_m) = x, \ \ x^{(i)}(x^i_m) = x^i
\end{flalign}
note that $x_m = x_{0m} + k \alpha/2^N$ for $x \in V^{N,k}$ and $x^i_m = x_{0m} + (k-1) \alpha/2^N$ for $x^i \in V^{N,k-1}$. The bounds of relation \ref{eq:3.4} are such that $f^{N, k-1}_{i, m,V_{\text{res.}}}(x) \leq f_i (x^i ) \leq f^{N, k-1}_{i,M,V_{\text{res.}}}(x) $ and $m^{N,k}_{D_i}(x) \leq D_i(x^{(i)}(t) ,f(x^{(i)}(t))) \leq M^{N,k}_{D_i}(x)$ for $ (k-1) \alpha/2^N \leq t - x_{0m} \leq k \alpha/2^N$. Next we give precise definitions for these bounds. We first define $f^{N, k-1}_{i,M, V}(x)$, $f^{N, k-1}_{i,m, V}(x)$, $M^{N,k}_{D_i}(x)$ and $m^{N,k}_{D_i}(x)$ for $x \in V^{N,k}$:
\begin{equation}
\label{eq:3.5}
\begin{split}
 & \hspace{-0.3cm} f^{N, k-1}_{i,M ,V}(x) \equiv \max \left\{ f^{N, k-1}_{i, M}(z) \left| z \in S^{N,k}_{+, x} \cap V^{N,k-1} \right. \right\},\\ & \hspace{-0.3cm} f^{N, k-1}_{i, m ,V}(x) \equiv \min \left\{ f^{N, k-1}_{i, m}(z) \left| z \in S^{N,k}_{+, x} \cap V^{N,k-1} \right. \right\}, \\
& \hspace{-0.3cm} M^{N,k}_{D_i}(x) \! \equiv \! \max \left\{ D_i(z,y) \left| (z,y) \in P^{N,k}_{+, x} \right. \right\}, m^{N,k}_{D_i}(x) \! \equiv \! \min \left\{ D_i(z,y) \left| (z,y) \in P^{N,k}_{+, x} \right. \right\}
\end{split}
\end{equation}
with $S^{N,k}_{+, x}$ and $P^{N,k}_{+, x}$ for $x \in V^{N,k}$ given by
\begin{equation}
\begin{split}
\label{eq:3.6}
& S^{N,k}_{+, x} \equiv \left\{ z \in S_+ \left| -{ \alpha / 2^N} \leq z_m - x_{m} \leq 0, \right.\right. \\ & \hspace{1 cm} \left. M_{C_l} ( z_m - x_{m}) \leq z_l - x_l \leq m_{C_l} ( z_m - x_m) \right\} \\
& P^{N,k}_{+, x} \equiv \{ (z,y) \left| z \in S^{N,k}_{+, x} ,\ f^{N, k-1}_{i,m, V}(x) \! - \! M_{\Vert D \Vert} {\alpha \over 2^N} \leq y_i \leq f^{N, k-1}_{i,M,V}(x) + M_{\Vert D \Vert} {\alpha \over 2^N}, i = 1, ..., n \right. \}
\end{split}
\end{equation}
for when the characteristic curves $x^{(i)}(t)$ of the assumed solution $f$ pass through a $x \in V^{N,k}$ for $i = 1, ...,n$, i.e. $x^{(i)}(x_m) = x$, $S^{N,k}_{+, x}$ is defined in a way as to ensure that $x^{(i)}(t) \in S^{N,k}_{+, x}$ for $- { \alpha / 2^N} \leq t - x_{m} \leq 0$ and $P^{N,k}_{+, x}$ is defined in a way as to ensure that $\left( x^{(i)}(t),f(x^{(i)}(t)) \right) \in P^{N,k}_{+, x}$ for $ - { \alpha / 2^N} \leq t - x_{m} \leq 0$. $f^{N, k-1}_{i, M, V_{\text{res.}}}(x)$ and $f^{N, k-1}_{i,m, V_{\text{res.}}}(x)$ for $x \in V^{N,k}$ are given by
\begin{flalign}
\label{eq:3.7}
& f^{N, k-1}_{i,M, V_{\text{res.}}}(x) \equiv \max \left\{ f^{N, k-1}_{i, M}(z) \left| z \in V^{N,k-1}_{\text{res.} , i, x} \right. \right\} \nonumber \\
& f^{N, k-1}_{i, m, V_{\text{res.}}}(x) \equiv \min \left\{ f^{N, k-1}_{i, m}(z) \left| z \in V^{N,k-1}_{\text{res.} , i, x} \right. \right\}  \\
& V^{N, k-1}_{\text{res.} ,i, x} \equiv \left\{ z \in S^{N,k}_{+, x} \cap V^{N,k-1} \left| - M^{N,k}_{C_{il}}(x) \alpha / 2^N  \leq z_l - x_l \leq - m^{N,k}_{C_{il}}(x) \alpha / 2^N \right. \right\} \nonumber
\end{flalign}
with $M^{N,k}_{C_{il}}(x)$ and $m^{N,k}_{C_{il}}(x)$ having similar definitions as $M^{N,k}_{D_i}(x)$ and $m^{N,k}_{D_i}(x)$ in relation \ref{eq:3.5} respectively with $D_i$ replaced by $C_{il}$
\begin{flalign}
\label{eq:3.8}
\hspace{-0.3cm} M^{N,k}_{C_{il}}(x) \equiv \max \left\{ C_{il}(z,y) \left| (z,y) \in P^{N,k}_{+, x} \right. \right\}, \ \ m^{N,k}_{C_{il}}(x) \equiv \min \left\{ C_{il}(z,y) \left| (z,y) \in P^{N,k}_{+, x} \right. \right\}
\end{flalign}

From the definitions above it can be verified that the bounds of relation \ref{eq:3.4} for the assumed solution are correct. For example the maximum of $f_i$ at a point $x \in V^{N,k}$ consists of the maximum value of $f_i$ in a region of $V^{N,k-1}$ which the characteristic curve of $f_i$ passing through this region has the possibility of passing through $x$, this region of $V^{N,k-1}$ is given by $V^{N, k-1}_{\text{res.} ,i , x}$ defined in \ref{eq:3.7} and the maximum value is given by $f^{N, k-1}_{i, M,V_{\text{res.}}}(x)$, plus the maximum value which $f_i$ can change when its characteristic curve passing through $S^{N,k}_{+, x}$ reaches $x$, this is given by $M^{N,k}_{D_i}(x) {\alpha / 2^N}$.

Also it can be verified that the bounds for the solution at the $N+1$ step of the partitioning lie within the bounds of the $N$ step. This is discussed in detail in \ref{appen:B}, therefore with defining $P^N_+ \equiv \cup^{2^N}_{k = 1} ( \cup_{x \in V^{N,k}} P^{N,k}_{+,x} ) $ we have $P^N_+ \supseteq P^{N+1}_+ \supseteq ... $ . From the definitions and relations above it is clear that the graph of the assumed solution on $S_+$ lies within the set $P^N_+$ at the $N$ step of partitioning, $(x, f(x)) \in P^N_+$ for $x \in S_+$, therefore in order to show that $P^N_+$ converges to the graph of a unique function for the solution ($Ufs$) we only need to show that $f^{N, k}_{i, M}(x) - f^{N, k}_{i, m}(x) \rightarrow 0$ as $N \rightarrow \infty$.

For this we will try to find a similar recursion relation as in \ref{eq:2.9} for $\Delta f^{N,k} \geq f^{N, k}_{i, M}(x) - f^{N, k}_{i, m}(x) $, $\forall x \in V^{N,k}$. Starting from \ref{eq:3.4} we have
\begin{equation}
\label{eq:3.9}
f^{N, k}_{i, M}(x) - f^{N, k}_{i, m}(x) = f^{N, k-1}_{i,M,V_{\text{res.}}}(x) - f^{N, k-1}_{i,m,V_\text{res.}}(x) + \left\{ M^{N,k}_{D_i}(x) - m^{N,k}_{D_i}(x) \right\} \alpha /2^N
\end{equation}
an upper bound for $M^{N,k}_{D_i}(x) - m^{N,k}_{D_i}(x)$ is given by
\begin{equation}
\label{eq:3.10}
\begin{split}
M^{N,k}_{D_i}(x) - m^{N,k}_{D_i}(x) \leq & \bigg{\{} \sum_i \left\{ f^{N, k-1}_{i,M, V}(x) - f^{N, k-1}_{i,m, V}(x) + 2 M_{\Vert D \Vert} {\alpha \over 2^N} \right\} \\ & + \sum_l (M_{C_l} - m_{C_l}) {\alpha \over 2^N} + {\alpha \over 2^N} \bigg{\}} L_D
\end{split}
\end{equation}
with $L_D$ a Lipschitz constant for the $D_i$ functions and the expression in brackets corresponds to an upperbound for the distance $\Vert p_1 - p_2 \Vert_1$ between any two points $p_1, p_2 \in P^{N,k}_{+,x}$ defined in \ref{eq:3.6}. We also need to find an upper bound for $f^{N, k-1}_{i,M,V}(x) - f^{N, k-1}_{i,m,V}(x)$ in \ref{eq:3.10}. For this we will assume the functions $f^{N, k}_{i,M}(x) $ and $f^{N, k}_{i,m}(x)$ are Lipschitz with Lipschitz constant $L^{N,k}$. We will show this to be true and derive a recursion relation for the Lipschitz constants $L^{N,k}$ in Subsection \ref{sec:3.1}. We have
\begin{equation}
\label{eq:3.11}
\begin{split}
f^{N, k-1}_{i,M,V}(x) - & f^{N, k-1}_{i,m,V}(x) = f^{N, k-1}_{i, M}(x^{i}_{\max}) - f^{N, k-1}_{i, M}(x^{i}_{\min}) + \left( f^{N, k-1}_{i,M}(x^{i}_{\min}) - f^{N, k-1}_{i,m}(x^{i}_{\min}) \right) \\ & \leq L^{N,k-1} \sum_l (M_{C_l} - m_{C_l}) \alpha /2^N + \left( f^{N, k-1}_{i,M}(x^{i}_{\min}) - f^{N, k-1}_{i,m}(x^{i}_{\min}) \right)
\end{split}
\end{equation}
with $x^{i}_{\max}$ and $x^{i}_{\min}$ denoting the points in $S^{N,k}_{+, x} \cap V^{N,k-1}$ which $f^{N, k-1}_{i, M}$ and $f^{N, k-1}_{i, m}$ assume their maximum and minimum values in $S^{N,k}_{+, x} \cap V^{N,k-1}$, respectively. Combining \ref{eq:3.10} and \ref{eq:3.11} we have
\begin{equation}
\label{eq:3.12}
\begin{split}
M^{N,k}_{D_i}(x) - m^{N,k}_{D_i}(x) \leq & \bigg{\{} n \Delta f^{N,k-1} +  L^{N,k-1} n \sum_l (M_{C_l} - m_{C_l}) {\alpha \over 2^N} \\ & + \bigg{(} 2 n M_{\Vert D \Vert} + \sum_l (M_{C_l} - m_{C_l}) + 1 \bigg{)} {\alpha \over 2^N} \bigg{\}} L_D
\end{split}
\end{equation}
with $\Delta f^{N,k-1}$ an upper bound for the following quantity
\begin{equation}
\label{eq:3.13}
f^{N, k-1}_{i, M}(z) - f^{N, k-1}_{i, m}(z) \leq \Delta f^{N,k-1} , \ \ \ \   \forall z \in V^{N,k-1}
\end{equation}
note that based on \ref{eq:3.13}, we can take $\Delta f^{N,0} = 0$ since we defined $f^{N, 0}_{i, M} = f^{N, 0}_{i, m} = I_i$.
Similarly we can obtain a bound for $f^{N, k-1}_{i, M,V_{\text{res.}}}(x) - f^{N, k-1}_{i, m,V_\text{res.}}(x)$ in \ref{eq:3.9}
\begin{flalign}
\label{eq:3.14}
 f^{N, k-1}_{i,M,V_{\text{res.}}}& (x) \! - \! f^{N, k-1}_{i,m,V_\text{res.}}(x) = \! f^{N, k-1}_{i, M}( z^{i}_{\max}) \! - \! f^{N, k-1}_{i, M}(z^{i}_{\min}) \! + \! \left( f^{N, k-1}_{i, M}(z^{i}_{\min}) \! - \! f^{N, k-1}_{i, m}(z^{i}_{\min})\right) \nonumber \\ & \leq L^{N,k-1} \! \sum_l \! \left( M^{N,k}_{C_{il}}(x) - m^{N,k}_{C_{il}}(x) \right) \! {\alpha \over 2^N} \! + \! \left( f^{N, k-1}_{i, M}(z^{i}_{\min}) \! - \! f^{N, k-1}_{i, m}(z^{i}_{\min})\right)
\end{flalign}
with $z^{ i }_{\max}$ and $z^{i}_{\min}$ denoting the points in $V^{N, k-1}_{\text{res.} ,i, x}$ which $f^{N, k-1}_{i,M}$ and $f^{N, k-1}_{i, m}$ assume their maximum and minimum values in $V^{N, k-1}_{\text{res.} ,i, x}$, respectively.
Similar to \ref{eq:3.12} we can obtain a bound for $M^{N,k}_{C_{il}}(x) - m^{N,k}_{C_{il}}(x)$. We have
\begin{equation}
\label{eq:3.15}
\begin{split}
M^{N,k}_{C_{il}}(x) - m^{N,k}_{C_{il}}(x) \leq & \bigg{\{}n \Delta f^{N,k-1} +  L^{N,k-1} n \sum_l (M_{C_l} - m_{C_l}) {\alpha \over 2^N} \\ & + \bigg{(} 2 n M_{\Vert D \Vert} + \sum_l (M_{C_l} - m_{C_l}) + 1 \bigg{)} {\alpha \over 2^N} \bigg{\}} L_C
\end{split}
\end{equation}
with $L_C$ a Lipschitz constant for the $C_{il}$ functions. Now using \ref{eq:3.12}, \ref{eq:3.14} and \ref{eq:3.15} we can find a bound for \ref{eq:3.9}, we have
\begin{flalign}
\label{eq:3.16}
& \Delta f^{N,k} \equiv \Delta f^{N,k-1} + \left( L_C (m-1) L^{N,k-1} + L_D \right) {\alpha \over 2^N} \bigg{\{} n \Delta f^{N,k-1} +  L^{N,k-1} n \sum_l (M_{C_l} - m_{C_l}) {\alpha \over 2^N} + \nonumber \\ & \bigg{(} 2 n M_{\Vert D \Vert} + \sum_l (M_{C_l} - m_{C_l}) + 1 \bigg{)} {\alpha \over 2^N} \bigg{\}} \geq  f^{N, k}_{i, M}(x) - f^{N, k}_{i, m}(x) , \ \ \forall x \in V^{N,k}
\end{flalign}
In Subsection \ref{sec:3.1} we will derive a recursion relation for the Lipschitz constants $L^{N,k}$ and in Subsection \ref{sec:3.2} we will show that they are locally (i.e. for a sufficiently small $\alpha$) bounded. With knowing this we can write \ref{eq:3.16} as
\begin{equation}
\label{eq:3.17}
\begin{split}
\Delta f^{N,k} & = \Delta f^{N,k-1} \left( 1 + C_1 \alpha/2^N \right) + C_2 \left( \alpha /2^N \right)^2
\end{split}
\end{equation}
with $C_1$ and $C_2$ being constants which bound the following quantities
\begin{flalign}
\label{eq:3.18}
& C_1 \geq n (m-1) L_C L^{N,k-1} + n L_D \\
& C_2 \geq \left( (m-1) L_C L^{N,k-1} + L_D \right) \bigg{\{} L^{N,k-1} n \sum_l ( M_{C_l} - m_{C_l} )  + 2 n M_{\Vert D \Vert} + \sum_l ( M_{C_l} - m_{C_l} ) + 1 \bigg{\}} \nonumber
\end{flalign}
\ref{eq:3.17} is the recursion relation similar to \ref{eq:2.9} we were looking for. For completeness we include the recursion relation for the Lipschitz constants to be derived in Subsection \ref{sec:3.1}, the locality criteria for $\alpha$ and a bound for the Lipschitz constants $L^{N,k}$, to be derived in Subsection \ref{sec:3.2}, and a Lipschitz constant for the unique function for the solution ($Ufs$) to Theorem \ref{theorem3.1} to be derived below, here
\begin{flalign}
\label{eq:3.20}
& \Delta f^{N,k} = \Delta f^{N,k-1} \left(1 + C_1 {\alpha \over 2^N} \right) + C_2 \left( {\alpha \over 2^N} \right)^2 \nonumber \\
& L^{N,k} = L^{N,{k-1}} \left(1 + (m-1) L_C {\alpha \over 2^N} +n L_D {\alpha \over 2^N} \right) + n(m-1)L_C { \alpha \over 2^N} \left( L^{N,{k-1}} \right) ^2 + L_D {\alpha \over 2^N} \nonumber \\
& \alpha  <  {1 \over  \exp \left( \theta(c_1) c_1  \alpha \right) n(m-1)L_C (L_I + 1/n) } \ \ , \ \ c_1 = n L_D - (m-1)L_C \\ & L^{N, 2^N} \leq  {(L_I + 1/n) \exp(c_1 \alpha) \over 1 - n (m-1) L_C \alpha (L_I + 1/n) \exp( \theta(c_1) c_1 \alpha)} -1/n \equiv L_f \nonumber \\ & L_{Ufs} = \max \left\{ L_f, M_{\Vert D \Vert} + L_f (m-1) M_{\Vert C \Vert} \right\} \nonumber
\end{flalign}
Relations \ref{eq:3.20} constitute the main relations of Theorem \ref{theorem3.1}. $L_I$ refers to the Lipschitz constant of the initial condition function $I$ and $\theta (c_1)$ the step function.

With knowing that the Lipschitz constants $L^{N,k}$ are locally bounded we can use the first relation in \ref{eq:3.20} to show that $\Delta f^{N,k} \rightarrow 0$ as $N \rightarrow \infty$ similar to the steps in relation \ref{eq:2.10}
\begin{flalign}
\label{eq:3.21}
 \Delta f^{N,k} & = C_2 ( \alpha /2^N )^2 \{1 + (1 + C_1 \alpha/2^N ) + ... + (1 + C_1 \alpha/2^N )^{k-1} \} \nonumber \\ & = C_2/C_1 \alpha /2^N \{ (1 + C_1 \alpha/2^N )^{k} - 1 \} \\ & \hspace{-1cm} \Longrightarrow \Delta f^{N,k} \leq C_2/C_1 \{ \exp(C_1 \alpha k /2^N) - 1 \} \alpha /2^N \nonumber
\end{flalign}
from relation \ref{eq:3.21} it is clear that $\Delta f^{N,k} \rightarrow 0$ as $N \rightarrow \infty$, hence $P^{N}_+$ converges to the graph of a unique function for the solution ($Ufs$) to Theorem \ref{theorem3.1}. We will prove in Subsection \ref{sec:3.3} that $Ufs$ indeed solves the PDE of Theorem \ref{eq:3.1} subject to the initial condition. Before moving on to the next Subsection we show that $Ufs$ is also Lipschitz in the $x_m$ direction. $L_f$ in relation \ref{eq:3.20} can be considered as the Lipschitz constant of $Ufs$ along the hyperplanes $x_m = \text{const.}$ in $S_+$ for $x_{0m} \leq \text{const.} \leq x_{0m} + \alpha$. Consider $V^{N,k_N}$ and $V^{N,k'_N}$ for $q = k_N /2^N$ and $q' = k'_N /2^N$ held fixed as $N \rightarrow \infty$ and $\Delta x_m = q' - q $ for $q' > q$. It can be easily seen that a bound for the difference $| f^{N,k'_N}_{i,M} (x + \hat{e}_m \Delta x_m) - f^{N,k_N}_{i, M}(x) |$ for $x \in V^{N,k_N}$ and $x + \hat{e}_m \Delta x_m \in V^{N,k'_N}$ is $M_{\Vert D \Vert} \Delta x_m + L_f (m-1) M_{\Vert C \Vert} \Delta x_m$, with $M_{\Vert C \Vert}$ being a bound for $| C_{il} |$ on $P$ and $\hat{e}_m $ the unit vector in the $x_m$ direction, hence $M_{\Vert D \Vert} + L_f M_{\Vert C \Vert} (m-1)$ can be considered as a Lipschitz constant for $Ufs$ in the $x_m$ direction. Therefore
\begin{flalign}
\label{eq:3.22}
L_{Ufs} = \max \{ L_f, M_{\Vert D \Vert} + L_f (m-1) M_{\Vert C \Vert} \} 
\end{flalign}
is a Lipschitz constant for $Ufs$ on $S_+$ (or $S_-$). Note that relations of \ref{eq:3.20} are equivalently valid for when constructing the solution on the $S_-$ domain with $\alpha > 0$ being the extent which, in general, the solution can be constructed below the initial condition hyperplane $V$. A list of the equivalent of the definitions used in this Section for when constructing the solution on the $S_-$ domain is given in \ref{appen:A}.

\subsection{A recursion relation for the Lipschitz constants $L^{N,{k}}$}
\label{sec:3.1}
In this Subsection we will obtain a recursion relation for the Lipschitz constants $L^{N,{k}}$ of the functions $f^{N, k}_{i, M} (x)$. A similar result will be reached if we work with the functions $f^{N, k}_{i, m} (x)$. Let $L^{N,0} = L_I$, with $L_I$ being the Lipschitz constant of the initial condition functions $I_i$. Take two separate points $p_1, p_2 \in V^{N,k}$. With assuming $L^{N,k-1}$ is known we would like to find an expression for $L^{N,k}$
\begin{equation}
\label{eq:3.23}
|f^{N, k}_{i, M} (p_1) - f^{N, k}_{i, M} (p_2)| \leq L^{N,k} \sum_l | p_{1 l } - p_{2 l} | , \ \ \  l =1, ..., m-1
\end{equation}
For this we will make use of the following Lemma:
\begin{lemma}
\onehalfspacing
\label{lemma3.1}
Let $g: W \subseteq \mathbb{R}^n \rightarrow \mathbb{R}$ be a Lipschitz continuous function with Lipschitz constant $L_g$ for the 1-norm. $W_1, W_2 \subseteq W$ be compact sets and consider $d$ with the following characteristics:
\begin{equation}
  \forall w_1 \in W_1, \exists w_2 \in W_2 \! : \! \Vert w_1 - w_2 \Vert_1 \! \leq d, \ \text{and} \ \text{vice versa:} \ \forall w_2 \! \in \! W_2, \exists w_1 \! \in \! W_1 \! : \! \Vert w_1 - w_2 \Vert_1 \! \leq \! d \nonumber
\end{equation}
then we have the following relations: $|M_g(W_1) - M_g(W_2)| \leq L_g d$ and $|m_g(W_1) - m_g(W_2)| \leq L_g d$. Where $M_g(W_r)$ and $m_g(W_r)$ denote the maximum and minimum values of $g$ in $W_r$ for $r = 1, 2$, respectively.
\end{lemma}
\begin{proof}
By the assumption of compactness of $W_r$ and continuity of $g$ there exists $w_r \in W_r$ such that $g(w_r) = M_g(W_r)$ for $r = 1,2$. By assumption of the lemma there is a $y_2 \in W_2$ such that $\Vert w_1 - y_2 \Vert_1 \leq d$ so we have $| g(w_1) - g(y_2) | \leq L_g d$ and since $g(y_2) \leq g(w_2)$ we have : $g(w_2) + L_g d \geq g(w_1)$ and similarly it can be concluded $g(w_1) + L_g d \geq g(w_2)$ which proves $|M_g(W_1) - M_g(W_2)| \leq L_g d$. Similarly it can be concluded that $|m_g(W_1) - m_g(W_2)| \leq L_g d$.
\end{proof}
\textbf{Note:} Consider $ B_1 \equiv \prod^n_{h = 1} [a_h,b_h] , B_2 \equiv \prod^n_{h = 1} [c_h,d_h] \subset \mathbb{R}^n $. Then $d = \sum^n_{h = 1} \max \{ |a_h - c_h|, |b_h - d_h| \} $ has the characteristics of the distance $d$ in Lemma \ref{lemma3.1} with respect to the subsets $B_1$ and $B_2$.

From \ref{eq:3.4}
\begin{equation}
\label{eq:3.24}
f^{N, k}_{i, M} (p_1) - f^{N, k}_{i, M} (p_2) = f^{N, k-1}_{i, M,V_{\text{res.}}}(p_1) - f^{N, k-1}_{i, M,V_{\text{res.}}}(p_2) + \left( M^{N,k}_{D_i}(p_1) - M^{N,k}_{D_i}(p_2) \right) {\alpha \over 2^N}
\end{equation}
assuming $d_1$ has the characteristics of the distance $d$ in Lemma \ref{lemma3.1} for the two sets $V^{N, k-1}_{\text{res.}, i, p_1 }$ and $V^{N, k-1}_{\text{res.}, i , p_2}$ we have
\begin{equation}
\label{eq:3.25}
|f^{N, k-1}_{i,M,V_{\text{res.}}}(p_1) - f^{N, k-1}_{i,M,V_{\text{res.}}}(p_2)| \leq L^{N,k-1} d_1
\end{equation}
based on the definitions of $V^{N, k-1}_{\text{res.} , i , p_r}$ for $r = 1,2$ in \ref{eq:3.7} and the \textbf{Note} after Lemma \ref{lemma3.1} we can find an expression for $d_1$
\begin{flalign}
\label{eq:3.26}
d_1 \! \geq \!\! \sum_l \! \max \! \left\{ \left\vert p_{1l} \! - \! p_{2l} \! + \! \left( \! M^{N,k}_{C_{il}}(p_2) \! - \! M^{N,k}_{C_{il}}(p_1) \! \right) \! {\alpha \over 2^N} \right\vert \! , \left\vert p_{1l} \! - \! p_{2l} \! + \! \left( \! m^{N,k}_{C_{il}}(p_2) \! - \! m^{N,k}_{C_{il}}(p_1) \! \right) \! {\alpha \over 2^N} \right\vert  \right\}
\end{flalign}
a bound for $\left\vert M^{N,k}_{C_{il}}(p_1) - M^{N,k}_{C_{il}}(p_2) \right\vert$ or $\left\vert m^{N,k}_{C_{il}}(p_1) - m^{N,k}_{C_{il}}(p_2) \right\vert$ is given by
\begin{flalign}
\label{eq:3.27}
\left\vert M^{N,k}_{C_{il}}(p_1) - M^{N,k}_{C_{il}}(p_2) \right\vert \leq L_C d_2 , \ \ \ \left\vert m^{N,k}_{C_{il}}(p_1) - m^{N,k}_{C_{il}}(p_2) \right\vert \leq L_C d_2
\end{flalign}
with $d_2$ having the characteristics of the distance $d$ in Lemma \ref{lemma3.1} for the two sets $P^{N,k}_{+, p_1}$ and $P^{N,k}_{+, p_2}$. Based on the definitions of $P^{N,k}_{+, p_r}$ for $r = 1, 2$ in \ref{eq:3.6} and the \textbf{Note} after Lemma \ref{lemma3.1} we can find an expression for $d_2$
\begin{equation}
\label{eq:3.28}
d_2 \geq \sum_l \left\vert p_{1l} - p_{2l} \right\vert + \sum_i \max \left\{ \left| f^{N,k-1}_{i,M,V}(p_1) - f^{N,k-1}_{i,M,V}(p_2) \right|, \left| f^{N,k-1}_{i,m,V}(p_1) - f^{N,k-1}_{i,m,V}(p_2) \right| \right\} 
\end{equation}
a bound for $\left| f^{N,k-1}_{i,M,V}(p_1) - f^{N,k-1}_{i,M,V}(p_2) \right|$ or $\left| f^{N,k-1}_{i,m,V}(p_1) - f^{N,k-1}_{i,m,V}(p_2) \right|$ is given by
\begin{equation}
\label{eq:3.29}
\begin{split}
\left| f^{N,k-1}_{i,M,V}(p_1) - f^{N,k-1}_{i,M,V}(p_2) \right| \leq L^{N,k-1} \sum_l \left\vert p_{1l} - p_{2l} \right\vert \\ \left| f^{N,k-1}_{i,m,V}(p_1) - f^{N,k-1}_{i,m,V}(p_2) \right| \leq L^{N,k-1} \sum_l \left\vert p_{1l} - p_{2l} \right\vert
\end{split}
\end{equation}
from the definitions of \ref{eq:3.5} and \ref{eq:3.6} it can be verified that $\sum_l \left\vert p_{1l} - p_{2l} \right\vert$ has the characteristics of the distance $d$ in Lemma \ref{lemma3.1} for the two sets $S^{N,k}_{+, p_1} \cap V^{N,k-1}$ and $S^{N,k}_{+, p_2} \cap V^{N,k-1}$.

From \ref{eq:3.28} and \ref{eq:3.29}, $d_2$ is given by
\begin{equation}
\label{eq:3.30}
d_2 = \sum_l \left\vert p_{1l} - p_{2l} \right\vert \left( 1 + n L^{N,k-1} \right)
\end{equation}
and from \ref{eq:3.26} and \ref{eq:3.27} $d_1$ is given by
\begin{equation}
\label{eq:3.31}
 d_1 = \sum_l \left\vert p_{1l} - p_{2l} \right\vert + (m-1) L_C {\alpha \over 2^N} d_2
\end{equation}
Similarly a bound for $\left| M^{N,k}_{D_i}(p_1) - M^{N,k}_{D_i}(p_2) \right|$ is given by
\begin{flalign}
\label{eq:3.32}
\left| M^{N,k}_{D_i}(p_1) - M^{N,k}_{D_i}(p_2) \right| \leq L_D d_2
\end{flalign}
From \ref{eq:3.25}, \ref{eq:3.30}, \ref{eq:3.31} and \ref{eq:3.32} we obtain a bound for \ref{eq:3.24}
\begin{equation}
\label{eq:3.33}
\begin{split}
\left| f^{N, k}_{i, M} (p_1) - f^{N, k}_{i, M} (p_2) \right| \leq & \left\{ L^{N,k-1} \left(1 + (m-1) L_C {\alpha \over 2^N} + (m-1) L_C {\alpha \over 2^N} n L^{N,k-1} \right) \right. \\ & \left. + L_D {\alpha \over 2^N} \left( 1 + n L^{N,k-1} \right) \right\} \sum_l \left\vert p_{1l} - p_{2l} \right\vert
\end{split}
\end{equation}
Comparing \ref{eq:3.23} and \ref{eq:3.33} we find an expression for $L^{N,k}$
\begin{equation}
\label{eq:3.34}
L^{N,k}\! = \! L^{N,k-1} \! \left( 1 + (m-1) L_C {\alpha \over 2^N} + nL_D {\alpha \over 2^N} \right) \! + n(m- 1)L_C {\alpha \over 2^N} \! \left( L^{N,k-1} \right)^2 \!\! + L_D {\alpha \over 2^N}
\end{equation}

\subsection{Local boundedness of $L^{N,k}$}
\label{sec:3.2}
The nonlinear term $\sim \left( L^{N,k-1} \right) ^2$ in \ref{eq:3.34} is the term that can lead to an unbounded increase of the Lipschitz constants $L^{N,k}$, but if the coefficient of this term $( \sim \! n(m-1)L_C \alpha )$ is small enough we expect to be able to show that the Lipschitz constants are bounded. We first rewrite \ref{eq:3.34} in a simpler form
\begin{equation}
\label{eq:3.35}
\begin{split}
& b_k  = b_{k-1} \gamma + ( b_{k-1} )^2, \ \ \ b_k \equiv c_2 {\alpha \over 2^N} ( L^{N,k} + {1 / n}) , \ \ \ k =1, ..., 2^N \\
& \gamma = \left(1 + c_1 \alpha / 2^N \right), \ \ c_1 = nL_D - (m-1)L_C, \ \ c_2 = n(m-1)L_C
\end{split}
\end{equation}
For convenience we have suppressed the index $N$ in $b_k$. Note that for $c_1 > 0$, $\gamma > 1$ but for $c_1 < 0$ and a sufficiently large $N$, $0 < \gamma < 1$ with $\gamma \rightarrow 1$ as $N \rightarrow \infty$. The first few terms of the sequence $b_k$ read
\begin{equation}
\label{eq:3.36}
\begin{split}
& b_1 = b_0 \gamma + { b_0 } ^2 \\ & b_2 = \gamma^2 b_0 + (\gamma + \gamma^2) { b_0 } ^2 + 2 \gamma { b_0 }^3 + { b_0 }^4
\end{split}
\end{equation}
From \ref{eq:3.35} and \ref{eq:3.36} it is clear that $b_k$ is a polynomial of degree $2^k$ in $b_0$
\begin{flalign}
\label{eq:3.37}
b_{k'} = \sum^{2^{k'}}_{h = 1} C^h_{k'} (\gamma) b_0^h , \ \ \ \  C^h_{k'} = 0 \ \text{for} \ h > 2^{k'} \ \text{or} \ h < 1, \ k' \in \mathbb{N} \cup \{ 0 \} 
\end{flalign}
with $C^h_{k'} (\gamma)$ a polynomial in $\gamma$. To show the local boundedness of $L^{N,k}$ we need to find a bound for the coefficients $C^h_k$. For this insert $b_{k-1}$ from relation \ref{eq:3.37} into relation \ref{eq:3.35} to obtain
\begin{equation}
\label{eq:3.38}
b_k = \gamma C^h_{k-1} b_0^h + \left(C^h_{k-1} b_0^h \right)^2 = \gamma C^h_{k-1} b_0^h + \sum^{2^{k}}_{h_1 = 2} \sum^{h_1 -1}_{h_2 = 1} C^{h_1 - h_2}_{k-1}C^{h_2}_{k-1} b_0^{h_1}
\end{equation}
summation over $h$ is implicit. From \ref{eq:3.38} a recursion relation for the coefficients $C^h_k$ can be derived
\begin{equation}
\label{eq:3.39}
\begin{split}
& C^h_k = \gamma C^h_{k-1} + 2 C^{h-1}_{k-1} C^1_{k-1} + ... + 2 C^{{h / 2} +1}_{k-1} C^{{h / 2} -1}_{k-1} + \left( C^{{h / 2}}_{k-1} \right)^2, \ \ \  \text{if} \ h \ \text{is} \ \text{even} \\
& C^h_k = \gamma C^h_{k-1} + 2 C^{h-1}_{k-1} C^1_{k-1} + ... + 2 C^{(h +1)/2}_{k-1} C^{(h -1)/2}_{k-1}, \ \ \  \text{if} \ h \ \text{is} \ \text{odd}
\end{split}
\end{equation}
In what follows we will show that the coefficients $C^h_k$ are bounded by the inequalities below
\begin{equation}
\label{eq:3.40}
\begin{split}
& C^h_k \leq k^{h-1} \gamma^{k h}, \ \ \ \ \ \  \gamma \geq 1 \\ & C^h_k \leq k^{h-1} \gamma ^{k-1} \ , \ \ \ {1 / 2} \leq \gamma < 1
\end{split}
\end{equation}
one might be able to improve the bounds in \ref{eq:3.40} and accordingly improve the bounds of relation \ref{eq:3.47} by a more careful study of the coefficients $C^h_k$. But these bounds suffice to capture the main features of a locality condition for $\alpha$.

From relation \ref{eq:3.35} and \ref{eq:3.36} it can be verified that $C^1_k = \gamma^k$ and $C^2_k = \sum^{2k -2}_{h = k-1} \gamma^h$ , satisfying the inequalities of \ref{eq:3.40}. So assuming $C^{h'}_k \leq k^{h'-1} \gamma^{kh'}$ holds for $1 \leq h' < h$ lets try to prove $C^{h}_k \leq k^{h-1} \gamma^{kh}$ for $\gamma \geq 1$ and for $h \geq 3$. Applying this to \ref{eq:3.39} we have
\begin{equation}
\label{eq:3.41}
\begin{split}
& C^h_k \leq \gamma C^h_{k-1} + 2 (h / 2 -1) (k-1)^{h-2} \gamma^{h(k-1)} + (k-1)^{h-2} \gamma^{h(k-1)}, \ \ \  \text{if} \ h \ \text{is} \ \text{even} \\ & C^h_k \leq \gamma C^h_{k-1} + 2 {(h - 1) \over 2} (k-1)^{h-2} \gamma^{h(k-1)}, \ \ \  \text{if} \ h \ \text{is} \ \text{odd}
\end{split}
\end{equation}
so in both cases we obtain
\begin{flalign}
\label{eq:3.42}
C^h_k \leq \gamma C^h_{k-1} + (h - 1) (k-1)^{h-2} \gamma^{h(k-1)}
\end{flalign}
applying this inequality to $C^h_{k-1} , C^h_{k-2}, ...$ we obtain
\begin{flalign}
\label{eq:3.43}
C^h_k & \leq \gamma^{k} C^h_{0}+ (h-1)0^{(h-2)} \gamma^{0+k-1} + (h-1)1^{(h-2)} \gamma^{h+k-2} + ... + (h - 1) (k-2)^{h-2} \gamma^{h(k-2) +1} \nonumber \\ & + (h - 1) (k-1)^{h-2} \gamma^{h(k-1)} \leq 0 + \gamma^{kh} \int^{k}_0 (h-1) x^{h-2} dx = k^{h-1} \gamma^{kh}
\end{flalign}
note that $C^h_{0} = 0$ for $h \geq 3$. We also used the fact that $\gamma^{kh} \geq \gamma^{h(k -1- r) + r}$ for $\gamma \geq 1$, $r =0, ..., k-1$, $k =1, ..., 2^N$ and $h \geq 3$ in the above relation.

Similarly if we assume $C^{h'}_k \leq k^{h'-1} \gamma^{k-1}$ holds for $1 \leq h' < h$, it is possible to prove that $C^{h}_k \leq k^{h-1} \gamma^{k-1}$ for $1/2 \leq \gamma < 1$ and $h \geq 3$. Applying this to \ref{eq:3.39} for both even and odd cases we obtain
\begin{flalign}
\label{eq:3.44}
C^h_k \leq \gamma C^h_{k-1} + (h - 1) (k-1)^{h-2} \gamma^{2(k-2)}
\end{flalign}
applying this inequality to $C^h_{k-1} , C^h_{k-2}, ...$ we have
\begin{flalign}
\label{eq:3.45}
& C^h_k \leq \! \gamma^{k} C^h_{0} \!+ (h \!- \! 1) \{ 0^{(h-2)} \gamma^{-2 + k-1} + \! 1^{(h-2)} \gamma^{0+k-2} \! + 2^{(h-2)} \gamma^{2+k-3} + ... + (k \! - \! 2)^{h-2} \gamma^{2(k-3) +1} \} \nonumber \\ & \! + \! (h \! - \! 1) (k\! - \!1)^{h-2} \gamma^{2(k-2)} \! \leq 0 + \! \gamma^{k-1}(h\!- \!1) \Big{\{} \! \int^{2}_0 \!\! x^{h-2} dx + \!\!\! \int^{k}_2 \! \! x^{h-2} dx \Big{\}} \! = \! k^{h-1} \gamma^{k-1}
\end{flalign}
we used the fact that $\gamma^{\tilde{k}-1} \geq \gamma^{2(\tilde{k}-2 - r) + r}$,$r =0, ..., \tilde{k}-3$, $\tilde{k} =3, ..., 2^N$ and $\int^{2}_0 x^{h-2} dx \geq 1/\gamma$ for $1/2 \leq \gamma < 1$ in the above relation. Hence the inequalities of \ref{eq:3.40} are proven. Applying \ref{eq:3.40} to \ref{eq:3.37} for $k = 2^N$ we find
\begin{equation}
\label{eq:3.46}
\begin{split}
& c_2 {\alpha} (L^{N, 2^N} + {1 / n}) = 2^N b_{2^N} \leq \sum^{2^{2^N}}_{h = 1} ( 2^N b_0 \gamma^{2^N} )^h < {c_2 \alpha (L_I + 1/n) \exp(c_1 \alpha) \over 1 - c_2 \alpha (L_I + 1/n) \exp(c_1 \alpha)} , \ \ \ c_1 \geq 0 \\ & c_2 {\alpha} (L^{N, 2^N} \!\!\! + \! {1 / n}) \! = \! 2^N b_{2^N} \! \leq \! \sum^{2^{2^N}}_{h = 1} \! {\gamma^{2^N} \over \gamma }  ( 2^N b_0 )^h \! < \! {1 \over \gamma} {\exp(c_1 \alpha) c_2 \alpha (L_I + 1/n) \over 1 - c_2 \alpha (L_I + 1/n) } ,  c_1 \! < \! 0 , 1/2 \leq \gamma < 1
\end{split}
\end{equation}
with $L^{N,0} = L_I$ the Lipschitz constant of the initial condition function $I$. We used $\gamma^{2^N} = (1 + (\alpha c_1) /2^N )^{2^N} \leq \exp (c_1 \alpha) $ in the above relations and assumed $2^N b_0 \exp (c_1 \alpha) < 1$ in the first relation and $2^N b_0 < 1$ in the second relation of \ref{eq:3.46}. From these assumptions and \ref{eq:3.46} we can find a locality condition for $\alpha$ and a bound for the Lipschitz constants $L^{N,k} \leq L^{N,2^N}$ \footnote[9]{We have dropped the $1 / \gamma$ factor on the righthand side of the second relation of \ref{eq:3.46} as $\gamma \rightarrow 1$ for $N \rightarrow \infty$. But now since $L^{N,2^N}$ is an increasing function of $N$ and the second relation of \ref{eq:3.47} is true in the limit of $N \rightarrow \infty$ then it must be true for all $N \in \{ 0 \} \cup \mathbb{N}$. To see how $L^{N,2^N}$ is an increasing function of $N$ consider $L^{N,k} = L^{N,k-1} \left( 1 + {e_1 / 2^N} \right) + (e_2 / 2^N) \left( L^{N,k-1} \right) ^2 + {e_3 / 2^N}$ from \ref{eq:3.34} with $e_1,e_2,e_3 \geq 0$. It suffices to show $L^{N+1,2k} \geq L^{N,k}$ for $k = 1, ..., 2^N$. Note that $L^{N+1,0} = L^{N,0} = L_I$, therefore lets assume $L^{N+1,2(k-1)} \geq L^{N,k-1}$ and try to prove $L^{N+1,2k} \geq L^{N,k}$. We have $L^{N+1,2k-1} = L^{N+1,2k-2} \left( 1 + {e_1 / 2^{N+1}} \right) + (e_2 / 2^{N+1}) \left( L^{N+1,2k-2} \right) ^2 + {e_3 / 2^{N+1}}$ and $L^{N+1,2k} = L^{N+1,2k-1} \left( 1 + {e_1 / 2^{N+1}} \right) + (e_2 / 2^{N+1}) \left( L^{N+1,2k-1} \right) ^2 + {e_3 / 2^{N+1}}$ $\Rightarrow$ $L^{N+1,2k} = L^{N+1,2k-2} \left( 1 + {e_1 / 2^{N}} \right) + (e_2 / 2^{N}) \left( L^{N+1,2k-2} \right) ^2 + {e_3 / 2^{N}} + terms \ greater \ than \ or \ equal \ to \ zero $. This proves $L^{N+1,2k} \geq L^{N,k}$.}
\begin{equation}
\label{eq:3.47}
\begin{split}
& \alpha  <  {1 \over  \exp \left( \theta(c_1) c_1  \alpha \right) n(m-1)L_C (L_I + 1/n) } \ \ , \ \ c_1 = n L_D - (m-1)L_C \\ &  L^{N, 2^N} \leq  {(L_I + 1/n) \exp(c_1 \alpha) \over 1 - n (m-1) L_C \alpha (L_I + 1/n) \exp( \theta(c_1) c_1 \alpha)} -1/n \equiv L_f
\end{split}
\end{equation}
with $\theta(c_!)$ the step function.

\subsection{Unique function for solution ($Ufs$) solves Theorem \ref{theorem3.1}}
\label{sec:3.3}
In this Subsection we will show that the $Ufs$ obtained in the previous Subsections is the solution of the system of PDE of Theorem \ref{theorem3.1}. With the Lipschitz condition for the initial condition and the coefficients $C_{il}$ and $D_i$, $Ufs$ is Lipschitz. Due to Radamechar theorem it is differentiable almost everywhere. Here we will show that $Ufs$ solves the system of PDE at its differentiable points. Consider two hyperplanes in $S_+$: $V_{\beta} = \{  z \in S_+ | z_m = \beta , x_{0m} \leq \beta \leq x_{0m} + \alpha \} $ and $V_{\beta + \delta \beta} = \{  z \in S_+ | z_m = \beta + \delta \beta , x_{0m} \leq \beta + \delta \beta \leq x_{0m} + \alpha \} $ for $\delta \beta > 0$. Define the function $g$ for $x - \hat{e}_m \delta \beta \in V_{\beta}$ and $x \in V_{\beta + \delta \beta}$
\begin{flalign}
\label{eq:3.48}
& g_i(x) \equiv Ufs_i \left( x - C_{i}(x^{\nu} , Ufs \left( x^{\nu}) \right) \delta \beta \right) + D_i(x^{\nu},Ufs(x^{\nu}))\delta \beta ,  \\ & C_i = (C_{i1}, ...,C_{im-1},1), \ x^{\nu} = x - \nu \delta \beta , \ \text{with} \ \  \nu = (m_{C_l} + M_{C_l}) \hat{e}_l /2 + \hat{e}_m \nonumber
\end{flalign}
$\hat{e}_j$ is the unit $m$-vector in the $x_j$ direction. Similar to before we can take $V_\beta$ as the initial condition hyperplane and $V_{\beta + \delta \beta}$ as the final hyperplane, but we will not partition the space in between, instead we take the limit $\delta \beta \rightarrow 0$. Based on how $g_i(x)$ is defined it can be seen to lie within the upper and lower bounds for the solution\footnote[10]{e.g. it can be verified that $x^{\nu} \in S^{0,1}_{+, x}$, $(x^{\nu}, Ufs(x^{\nu})) \in P^{0,1}_{+, x}$ therefore $m^{0,1}_{D_i}(x ) \leq D_i(x^{\nu},Ufs(x^{\nu})) \leq  M^{0,1}_{D_i}(x)$ and $-M^{0,1}_{C_{il}}(x) \leq - C_{il}(x^{\nu},Ufs(x^{\nu})) \leq -m^{0,1}_{C_{il}}(x )$ hence $(x - C_{i}(x^{\nu} , Ufs(x^{\nu}))\delta \beta) \in V^{0,0}_{\text{res.}, i, x}$ and $f^{0,0}_{i,m, V_{\text{res.}}} (x) \leq Ufs_i(x - C_{i}(x^{\nu} , Ufs(x^{\nu}))\delta \beta ) \leq f^{0,0}_{i,M, V_{\text{res.}}} (x ) $.}: $f^{0,1}_{i, m} (x ) \leq g_i(x) \leq f^{0,1}_{i, M}(x )$. Using the first relation of \eqref{eq:3.20} with $N = 0 $, $k = 1$, $\alpha = |\delta \beta|$ and noting that $\Delta f^{0,0} = 0$, we have
\begin{equation}
\label{eq:3.49}
\Delta f^{0,1} = C_2 (\delta \beta)^2
\end{equation}
since $Ufs_i (x)$ also lies within the upper and lower bounds for the solution $f^{0,1}_{i, m} (x) \leq Ufs_i(x ) \leq f^{0,1}_{i, M}(x )$, based on relation \ref{eq:3.49} we have $|g_i(x) - Ufs_i(x ) | = O (\delta \beta^2)$. Therefore
\begin{flalign}
\label{eq:3.50}
Ufs_i(x) - & Ufs_i(x - \hat{e}_m \delta \beta) = Ufs_i(x) - g_i(x) + g_i(x ) - Ufs_i(x - \hat{e}_m \delta \beta) \! = \nonumber \\ & O ({\delta \beta}^2) + Ufs_i(x - \hat{e}_m \delta \beta) \! - \! {\partial \over \partial x_l} Ufs_i(x - \hat{e}_m \delta \beta)C_{il}(x^{\nu},Ufs(x^{\nu})) \delta \beta + \nonumber \\ & \! R({\delta \beta}) + D_i(x^{\nu},Ufs(x^{\nu}))\delta \beta \! - \! Ufs_i(x - \hat{e}_m \delta \beta)
\end{flalign}
with $R(\delta \beta )/\delta \beta \rightarrow 0$ as $\delta \beta \rightarrow 0$ and we used the fact that $Ufs_i$ is differentiable at $x - \hat{e}_m \delta \beta$. Note that $x - \hat{e}_m \delta \beta \in V_{\beta}$ is a fixed point and $x \in V_{\beta + \delta \beta}$ is varied as $\delta \beta \rightarrow 0$. Another point to consider here is that we only used the fact that $Ufs_i$ is differentiable on $V_{\beta}$ and did not need to assume it is differentiable in the $x_m$ direction in \ref{eq:3.50}. Dividing relation \ref{eq:3.50} by $\delta \beta$ and taking the limit $\delta \beta \rightarrow 0$ we find
\begin{equation}
\label{eq:3.51}
C_{il}(x,Ufs(x)){\partial \over \partial x_l} Ufs_i(x) + {\partial \over \partial x_m }Ufs_i(x) = D_i(x,Ufs(x))
\end{equation}
This shows that $Ufs$ solves the PDE of relation \ref{theorem3.1} at its differentiable points subject to the initial condition \footnote[11]{Although the construction of $Ufs$ was done by moving in the positive $x_m$ direction it is clear that with similar methods it is possible to start from an initial condition hyperplane and construct the solution in the negative $x_m$ direction (c.f. \ref{appen:A}). Therefore the discussion here is equivalently valid for when making the replacement $\delta \beta \rightarrow - \delta \beta $ for $\delta \beta > 0$ and evaluating the derivative of $Ufs_i$ in the negative $x_m$ direction.}.

Next we will show that if the initial condition and the coefficients $C_{il}$ and $D_{i}$ are $C^1$ then $Ufs$ is $C^1$. We first show that $Ufs(x)$ is $C^1$ on $V^{N,k}$. We will make use of the following two theorems in mathematical analysis \cite{02}:
\begin{enumerate}
  \item Arzela-Ascoli theorem: Any bounded equicontinuous sequence of functions in $C^0(\prod^{d}_{h=1} [a_h,b_h], \mathbb{R})$ has a uniformly convergent subsequence.
  \item Theorem: The uniform limit of a sequence of functions in $C^1(\prod^{d}_{h=1} [a_h,b_h], \mathbb{R})$ is $C^1$ provided that the sequence of its partial derivatives also converges uniformly and the partial derivative of the uniform limit function is the same as the uniform limit of the partial derivative.
\end{enumerate}
Consider the collection of functions $f^{N,k}_i: V^{N,k} \rightarrow \mathbb{R}$ defined recursively as follows\begin{flalign}
\label{eq:3.52}
& f^{N,k}_i (x) = f^{N,k-1}_i ( x - C_i(x^{\nu} , f^{N,k-1}(x^{\nu})) \alpha/ 2^N ) + D_i(x^{\nu} , f^{N,k-1}(x^{\nu})) \alpha /2^N \nonumber \\
& f^{N,0}_i \equiv I_i , \ x \in V^{N,k} , \ x^{\nu}  = x - \nu {\alpha \over 2^N} , \ \nu = (m_{C_l} + M_{C_l}) \hat{e}_l /2 + \hat{e}_m
\end{flalign}
from the way the functions $f^{N,k}_i$ are defined it can be seen \footnote[12]{A similar reasoning as the footnote of the previous page holds here: $x^{\nu} \in S^{N,k}_{+, x}$, $(x^{\nu}, Ufs(x^{\nu})) \in P^{N,k}_{+, x}$ therefore $m^{N,k}_{D_i}(x ) \leq D_i(x^{\nu},Ufs(x^{\nu})) \leq  M^{N,k}_{D_i}(x)$ and $-M^{N,k}_{C_{il}}(x) \leq -C_{il}(x^{\nu},Ufs(x^{\nu})) \leq -m^{N,k}_{C_{il}}(x )$ hence $(x - C_{i}(x^{\nu} , Ufs(x^{\nu}))\alpha /2^N ) \in V^{N,k-1}_{\text{res.} , i, x}$ and $f^{N,k-1}_{i, m, V_{\text{res.}}} (x) \leq Ufs_i(x - C_{i}(x^{\nu} , Ufs(x^{\nu})) \alpha /2^N ) \leq f^{N,k-1}_{i,M, V_{\text{res.}}} (x ) $.}
\begin{equation}
\label{eq:3.53}
f^{N,k}_{i, m}(x) \leq f^{N,k}_i (x) \leq f^{N,k}_{i, M} (x) , \ \ \ x \in V^{N,k}
\end{equation}
we consider a fixed $V^{N,k_N}$ (for $1 \leq k_N \leq 2^N$) at $x_m = x_{0m} + q \alpha$ with $q = k_N / 2^{N}$ held fixed as $N \rightarrow \infty$. To show that $Ufs$ is differentiable on $V^{N,k_N}$ we have to show the following:
\begin{enumerate}
\item Uniform convergence of the sequence of functions $f^{N,k_N}_i$ on $V^{N,k_N}$.
\item Uniform convergence of the sequence (or at least a subsequence) of the partial derivatives ${\partial} f^{N,k_N}_i / \partial {x_l}$ on $V^{N,k_N}$.
\end{enumerate}
To show the uniform convergence of a subsequence of the partial derivatives it suffices to show the following:
\begin{enumerate}
\item [2.1] Boundedness of the sequence of partial derivatives ${\partial} f^{N,k_N}_i / \partial {x_l}$.
\item [2.2] Equicontinuity of the sequence of partial derivatives ${\partial} f^{N,k_N}_i / \partial {x_l}$.
\end{enumerate}

The first statement follows from relation \ref{eq:3.53} and the fact that as $N \rightarrow \infty$ the upper and lower bounds approach each other uniformly on $V^{N,k_N}$ as shown in \ref{eq:3.21}. To show statements $2.1$ and $2.2$ we take the partial derivative of \ref{eq:3.52}\footnote[13]{For brevity we have used the symbol $ H_{, x_l} \equiv {\partial H / \partial x_l} $.}
\begin{flalign}
\label{eq:3.54}
& {\partial \over \partial x_l} f^{N,\bar{k}}_i (x) = f^{N,\bar{k}-1}_{i , z_l}(z) + \left\{ - C_{ih ,x_l}(p^{\nu}) \alpha /2^N - C_{ih ,y_s}(p^{\nu}) f^{N,\bar{k} -1}_{s, x_l} (x^{\nu})  \alpha /2^N \right\} f^{N,\bar{k}-1}_{i , z_h}(z) \nonumber \\ & + D_{i,x_l}(p^{\nu}) {\alpha \over 2^N} + D_{i,y_s}(p^{\nu}) f^{N,\bar{k}-1}_{s, x_l}(x^{\nu}) {\alpha \over 2^N}, 1 \! \leq h \! \leq m - 1 , 1 \! \leq s \! \leq n, 1 \leq \! \bar{k} \! \leq k_N
\end{flalign}
with $p^{\nu} = (x^{\nu} , f^{N,k-1}(x^{\nu} ))$ and $z = x - C_i(p^{\nu}) \alpha/2^N$ in the above relation. Summation over $h$ and $s$ is implicit. To show the boundedness of the sequence of derivatives we assume a bound $ L^{N,\bar{k}-1}_f \geq |{\partial } f^{N,\bar{k}-1}_i(x') / \partial x'_l |$ is known for the partial derivatives of $f^{N,\bar{k}-1}_i(x')$ for $x' \in V^{N,\bar{k}-1}$ and look for $L^{N,\bar{k}}_f \geq |{\partial } f^{N,\bar{k}}_i(x) / \partial x_l |$. From \ref{eq:3.54} we can find such recursion relation
\begin{equation}
\label{eq:3.55}
 L^{N,\bar{k}}_f \! \equiv \! L^{N,\bar{k}-1}_f \!\! \left( \! 1 \! + \! (m\! - \! 1)L_C {\alpha \over 2^N} \! + nL_D {\alpha \over 2^N} \right) + n(m-1)L_C \! \left( L^{N,\bar{k}-1}_f \right)^2 \! \! \! {\alpha \over 2^N} + L_D {\alpha \over 2^N} \! \geq \! | f^{N,\bar{k}}_{i, x_l} (x) |
\end{equation}
where $L_C$ and $L_D$ are Lipschitz constants for $C_{il}(x,y)$ and $D_i(x,y)$ which bound $|C_{il,x_l}(x,y)|$, $|C_{i,y_s}(x,y)|$ and $|D_{i,x_l}(x,y)|$, $|D_{i,y_s}(x,y)|$ respectively, for $(x,y) \in P$ . Relation \ref{eq:3.55} is exactly similar to relation \ref{eq:3.34} obtained previously for the Lipschitz constants $L^{N,k}$. This proves $2.1$ that the sequence $f^{N,k_N}_{i, x_l}$ is bounded (locally in $\alpha$). To prove $2.2$ we have to show that the sequence $f^{N,k_N}_{i,x_l}$ is equicontinuous. The $C^1$ assumption for the initial condition and the coefficients $C_{il}(x,y)$ and $D_i(x,y)$ implies that $f^{N,k_N}_{i,x_l}$ is continuous and since they are defined on a compact set they are uniformly continuous, therefore we only have to show that for a $\epsilon > 0$ there is a common $\delta > 0$, independent of $N$, such that if $\Vert x - \widetilde{x} \Vert_1 < \delta \rightarrow | f^{N,k_N}_{i,x_l} (x) - f^{N,k_N}_{i,x_l} (\widetilde{x})| < \epsilon $, for $x, \widetilde{x} \in V^{N,k_N}$.

Taking the functions $f^{N,\bar{k} -1}_{i}(x')$ as known, for an $\epsilon^{N, \bar{k} -1} > 0$ choose $\delta^{N,\bar{k} -1} > 0$ such that if $\Vert x' - \widetilde{x}' \Vert_1 < \delta^{N,\bar{k}-1} $ for $x', \widetilde{x}' \in V^{N,\bar{k}-1}$ and $\Vert p' - \widetilde{p}' \Vert_1 < \delta^{N,\bar{k}-1} (1 + L_f) $ for $p', \widetilde{p}' \in P$ with $L_f$ given by \ref{eq:3.20}, then 
\begin{flalign}
\label{eq:3.55'}
& | f^{N,\bar{k}-1}_{i,x'_l} (x') - f^{N,\bar{k}-1}_{i,\widetilde{x}'_l} (\widetilde{x}')| < \epsilon^{N,\bar{k}-1} \nonumber \\ 
& | C_{il',y_s}(p') - C_{il',y_s}(\widetilde{p}') | < \epsilon^{N,\bar{k}-1} , \  | C_{il',x_l}(p') - C_{il',x_l}(\widetilde{p}') | < \epsilon^{N,\bar{k}-1}  \\ & | D_{i,y_s}(p') - D_{i,y_s}(\widetilde{p}') | < \epsilon^{N,\bar{k}-1} , \ | D_{i,x_l}(p') - D_{i,x_l}(\widetilde{p}') | < \epsilon^{N,\bar{k}-1} \nonumber
\end{flalign}
$l' =1, ..., m-1$. For these $\epsilon^{N,\bar{k}-1}$ and $\delta^{N,\bar{k}-1}$ lets see which $\epsilon^{N,\bar{k}}$ and $\delta^{N,\bar{k}}$ we will obtain for $f^{N,\bar{k}}_{i,x_l} $. For this lets evaluate $| f^{N,\bar{k}}_{i,x_l} (x) - f^{N,\bar{k}}_{i,x_l} (\widetilde{x})|$ using the right hand side of \ref{eq:3.54} for $x, \widetilde{x} \in V^{N,\bar{k}}$ and $\Vert x - \widetilde{x} \Vert_1 < \delta^{N,\bar{k}} $ . Note that the difference of the product of any number of terms can be written in terms of the difference of each of the terms multiplied by other terms, for example
\begin{flalign}
\label{eq:3.56}
A_1A_2 ... A_t - \widetilde{A}_1\widetilde{A}_2 ... \widetilde{A}_t = \delta A_1 A_2 ... A_t + \widetilde{A}_1 \delta A_2 A_3 ... A_t + ... + \widetilde{A}_1 \widetilde{A}_2 ... \widetilde{A}_{t-1} \delta A_t
\end{flalign}
for $\delta A_h \equiv A_h - \widetilde{A}_h$, $1 \leq h \leq t$. Therefore the difference of the right hand side of \ref{eq:3.54} can be written in terms of the difference of each of the terms at their corresponding two distinct points multiplied by other terms which are bounded. Their two distinct points are either $x^{\nu} = x - \nu {\alpha / 2^N}$ and $\widetilde{x}^{\nu} \equiv \widetilde{x} - \nu {\alpha / 2^N}$ or $p^{\nu} = (x^{\nu},f^{N,\bar{k} -1} (x^{\nu}))$ and $\widetilde{p}^{\nu} \equiv (\widetilde{x}^{\nu},f^{N,\bar{k} -1} (\widetilde{x}^{\nu}))$ or $z = x - C_i(p^{\nu})\alpha/2^N$ and $\widetilde{z} \equiv \widetilde{x} - C_i(\widetilde{p}^{\nu})\alpha/2^N$. A bound for the difference between these points are $\Vert p^{\nu} - \widetilde{p}^{\nu} \Vert_1 < \delta^{N,\bar{k}} (1 + L_f) $ or $\Vert x^{\nu} - \widetilde{x}^{\nu} \Vert_1 < \delta^{N,\bar{k}}$ or $ \Vert z - \widetilde{z} \Vert_1 < \delta^{N, \bar{k}} (1 + L_C(1 + L_f) \alpha /2^N )$. Assuming $\delta^{N, \bar{k}} (1 + L_C(1 + L_f) \alpha /2^N ) = \delta^{N,\bar{k}-1}$ (note that with this assumption $\delta^{N, \bar{k}} \leq \delta^{N, \bar{k}-1}$ and $\delta^{N, \bar{k}} (1 +L_f) \leq \delta^{N, \bar{k}-1} (1 +L_f)$) and using \ref{eq:3.55'} we can find a bound for $| f^{N,\bar{k}}_{i, x_l} (x) - f^{N,\bar{k}}_{i, x_l} (\widetilde{x}) |$
\begin{flalign}
\label{eq:3.58}
& | f^{N,\bar{k}}_{i, x_l} (x) - f^{N,\bar{k}}_{i, x_l} (\widetilde{x}) | < \epsilon^{N,\bar{k}-1} + \epsilon^{N,\bar{k}-1} G \alpha/2^N = \epsilon^{N,\bar{k}}
\end{flalign}
with $G \geq 0$ a bounded constant. Therefore the $\delta^{N, \bar{k}} $ $( \leq \delta^{N, \bar{k}-1} )$ and $\epsilon^{N,\bar{k}} $ $( \geq \epsilon^{N,\bar{k}-1} )$ obtained for $f^{N,\bar{k}}_{i, x_l}$ \footnote[14]{Note that for the $\delta^{N, \bar{k}}$ and $\epsilon^{N,\bar{k}}$ obtained, relation \ref{eq:3.55'} for the derivatives of $C_{il}$ and $D_{i}$ is also satisfied: $ p, \widetilde{p} \in P$, $\Vert p - \widetilde{p} \Vert_1 < \delta^{N, \bar{k}} (1 + L_f) \rightarrow | C_{il',y_s}(p) - C_{il',y_s}(\widetilde{p}) | < \epsilon^{N,\bar{k}} , \  | C_{il',x_l}(p) - C_{il',x_l}(\widetilde{p}) | < \epsilon^{N,\bar{k}}, \ | D_{i,y_s}(p) - D_{i,y_s}(\widetilde{p}) | < \epsilon^{N,\bar{k}} , \ | D_{i,x_l}(p) - D_{i,x_l}(\widetilde{p}) | < \epsilon^{N,\bar{k}}$ since $\delta^{N, \bar{k}} \leq \delta^{N, \bar{k}-1}$ and $\epsilon^{N,\bar{k}} \geq \epsilon^{N,\bar{k}-1}$. } in terms of $\delta^{N, \bar{k}-1}$ and $\epsilon^{N,\bar{k}-1}$ and eventually in terms of $\delta^{N, 0}$ and $\epsilon^{N,0}$ are as follows
\begin{flalign}
\label{eq:3.59}
& \epsilon^{N,\bar{k}} = \epsilon^{N,\bar{k}-1}(1 + G \alpha/2^N) = \epsilon^{N, 0}(1 + G \alpha/2^N)^{\bar{k}} \\ & \delta^{N, \bar{k}} = \delta^{N, \bar{k}-1} / (1 + L_C(1 + L_f) \alpha /2^N ) = \delta^{N, 0} / (1 + L_C(1 + L_f) \alpha /2^N )^{\bar{k}} \nonumber
\end{flalign}
for $\bar{k} = k_N = q 2^N$ we have
\begin{flalign}
\label{eq:3.60}
& \epsilon^{N, \ q 2^N} = \epsilon_0(1 + G \alpha/2^N)^{q 2^N} < \epsilon_{0} \exp (G q \alpha) = \epsilon \\ \label{eq:3.61} & \delta^{N, q2^N} = \delta_0 / (1 + L_C(1 + L_f) \alpha /2^N )^{q2^N} > \delta_0 / \exp ( L_C(1 + L_f) q \alpha) = \delta
\end{flalign}
where $\epsilon_0 = \epsilon^{N,0}$, $\delta_0 = \delta^{N,0}$. Therefore for a $\epsilon > 0$, we can choose $\epsilon_0$ small enough such that \ref{eq:3.60} is satisfied: $\epsilon_{0} \exp (G q \alpha) = \epsilon$. For this $\delta_0$ has to be chosen such that
\begin{flalign}
\label{eq:3.62}
& \Vert z - \widetilde{z} \Vert_1 < \delta_0 \rightarrow | I_{i,x_l} (z) - I_{i,x_l} (\widetilde{z})| < \epsilon_0 \ \ z , \widetilde{z} \in V \nonumber \\ & \Vert p - \widetilde{p} \Vert_1 < \delta_0 (1+ L_f) \rightarrow | C_{il',y_s}(p) - C_{il',y_s}(\widetilde{p}) | < \epsilon_0 , \\ & \hspace{0.5cm} | C_{il',x_l}(p) - C_{il',x_l}(\widetilde{p}) | < \epsilon_0 , | D_{i,y_s}(p) - D_{i,y_s}(\widetilde{p}) | < \epsilon_0 , | D_{i,x_l}(p) - D_{i,x_l}(\widetilde{p}) | < \epsilon_0 , \ \ p, \widetilde{p} \in P \nonumber
\end{flalign}
for the $\delta_0$ of \ref{eq:3.62} the $N$ independent $\delta$ is given by \ref{eq:3.61}: $\delta = \delta_0 / \exp ( L_C(1 + L_f) q \alpha)$. This shows that the sequence $f^{N,k_N}_{i,x_l}$ is equicontinuous and therefore statement 2.2 is proven. Therefore there exists a subsequence of $f^{N,k_N}_{i,x_l}$ for $l =1, ..., m-1$ that converges uniformly and since the sequence of $f^{N,k_N}_{i}$ converges uniformly to $Ufs_{i}$ on $V^{N,k_N}$ this shows that $Ufs_{i,x_l} (x)$ exists and is continuous in the direction of the variables $x_l$ for $l = 1, ..., m-1$ on $V^{N,k_N}$. Since the hyperplanes $V^{N,k_N}$ are dense in $S_+$ this easily generalizes to all hyperplanes parallel to the initial condition hyperplane in $S_+$ (e.g. by varying $\alpha$). Next we show that $Ufs_{i,x_l} (x)$ is continuous in the $x_m$ direction. Consider $V_{\beta}$ and $V_{\beta+\delta \beta}$ for $\delta \beta > 0$, defined at the beginning of Subsection \ref{sec:3.3}, as the initial condition and final hyperplane, respectively. We discretize the space in between along the $x_m$ direction similar to before. Consider \ref{eq:3.54} with $\alpha$ replaced by $\delta \beta$ and $V^{N,0}$ and $V^{N,2^N}$ corresponding to $V_{\beta}$ and $V_{\beta+ \delta \beta}$, respectively, with noting that all the terms have a bounded behaviour as $N \rightarrow \infty$ the recursion relation can be written as $f^{N,2^N}_{i,x_l} (x) = f^{N,2^N-1}_{i,x_l} (x - \hat{e}_m \delta \beta /2^N + \hat{e}_l O'_l( \delta \beta) /2^N ) + O'(\delta \beta) /2^N$ with $O'_l(\delta \beta)$ and $O'(\delta \beta)$ terms of order $\delta \beta$, therefore upon solving this relation for $f^{N,2^N}_{i,x_l} (x)$ ($x \in V_{\beta + \delta \beta}$) in terms of $f^{N,0}_{i,x'_l}(x' ) = Ufs_{i,x'_l}(x')$ ($x' \in V_{\beta}$), we find $f^{N,2^N}_{i,x_l}(x) = Ufs_{i,x_l}(x - \hat{e}_m \delta \beta + \hat{e}_l O^N_l( \delta \beta) ) + O^N(\delta \beta)$, with $x - \hat{e}_m \delta \beta + \hat{e}_l O^N_l( \delta \beta) \in V_{\beta}$, $O^N_l( \delta \beta)$ and $O^N( \delta \beta)$ terms of order $\delta \beta$. From $f^{N,2^N}_{i,x_l}$ there is a subsequence (e.g. $f^{a_n,2^{a_n}}_{i,x_l}$) that converges uniformly to $Ufs_{i ,x_l}(x)$, therefore \footnote[15]{$\lim_{n \rightarrow \infty} \hat{e}_l O^{a_n}_l(\delta \beta) \equiv \hat{e}_l O_l(\delta \beta)$ and $\lim_{n \rightarrow \infty} O^{a_n}(\delta \beta) \equiv O(\delta \beta)$, these limits are well defined. To see this consider $f^{a_n,2^{a_n}}_{i,x_l}(x) = Ufs_{i,x_l}(x - \hat{e}_m \delta \beta + \hat{e}_l O^{a_n}_l( \delta \beta) ) + O^{a_n}(\delta \beta)$, as noted $f^{a_n,2^{a_n}}_{i,x_l}(x)$ converges to $Ufs_{i, x_l}(x)$. $x - \hat{e}_m \delta \beta + \hat{e}_l O^{a_n}_l( \delta \beta) \in V_{\beta}$ converges to the point in $V_{\beta}$ which the characteristic curve of the solution $f_i$ passing through $x \in V_{\beta +\delta \beta}$ passes through in $V_{\beta}$, therefore the $O^{a_n}(\delta \beta)$ term also has a well defined limit as $n \rightarrow \infty$.}
\begin{flalign}
\label{eq:3.63}
Ufs_{i,x_l}(x) - & Ufs_{i,x_l}(x - \hat{e}_m \delta \beta ) = \lim_{n\rightarrow \infty}\{ f^{a_n, 2^{a_n}}_{i,x_l}(x ) \} - Ufs_{i,x_l}(x -\hat{e}_m \delta \beta) \nonumber \\ & = Ufs_{i,x_l}(x - \hat{e}_m \delta \beta + \hat{e}_l O_l( \delta \beta)) + O(\delta \beta) - Ufs_{i,x_l}(x - \hat{e}_m \delta \beta)
\end{flalign}
we already proved that $Ufs_{i,x_l}$ is continuous in the direction of the variables $x_l$ on $V_{\beta}$, therefore upon taking the limit $\delta \beta \rightarrow 0$ in \ref{eq:3.63} (note that for $x \in V_{\beta + \delta \beta}$, $x - \hat{e}_m \delta \beta \in V_{\beta}$ is a fixed point) it can be concluded that $Ufs_{i,x_l}$ is continuous in the $x_m$ direction\footnote[16]{As previously noted although the construction of $Ufs$ was done by moving in the positive $x_m$ direction it is clear that with similar methods it is possible to start from an initial condition hyperplane and construct the solution in the negative $x_m$ direction (c.f. \ref{appen:A}). Therefore the discussion in this page is equivalently valid for when making the replacement $\delta \beta \rightarrow - \delta \beta $ for $\delta \beta > 0$ and showing the continuity of $Ufs_{i,x_l}(x)$ in the negative $x_m$ direction.}. From \ref{eq:3.50} and \ref{eq:3.51} it follows that $Ufs_{i} (x)$ solves the system of PDE of \ref{eq:3.1} subject to the initial condition for all $x \in S_+$ and that $Ufs_{i, x_m} (x)$ exists and is continuous. Similarly with assuming that the initial condition and the coefficients $C_{il}$ and $D_i$ are $C^{r+1}$ for $r \geq 1$ we can show that the solution is $C^{r+1}$. For this consider the $r+1$ partial derivatives of \ref{eq:3.52}, by similar methods it can be shown that the sequence of a $r+1$ partial derivative of $f^{N,k_N}_{i}$ is bounded and equicontinuous and with a subsequence of its lower $r$ derivative converging uniformly, it can be concluded that the $r+1$ partial derivative of $Ufs_i$ in the $x_l$ directions exists and is continuous in the $x_l$ directions for $1 \leq l \leq m-1$, also similar to the argument above it can be concluded that the $r+1$ partial derivative in the $x_l$ directions is continuous in the $x_m$ direction. Then using \ref{eq:3.51} it can be shown that all $r+1$ partial derivatives in the $x_j$ direction for $j = 1, ..., m$ exist and are continuous.

Note that with the Lipschitz or $C^r$ assumption on the coefficients and the initial condition we obtain a Lipschitz or $C^r$ solution, respectively but the characteristic curves and the solution along these curves will be $C^1$ with Lipschitz continuous derivative and $C^{r+1}$, respectively as can be seen from relation \ref{eq:3.2}.

Although the solution was constructed on $S_+$ by a similar procedure we can define an $S_-$ domain and construct a unique solution there (c.f. \ref{appen:A}), it is also possible to extend the domain of the solution to a larger one by applying the same procedure on regions near the boundaries of the domain $S \equiv S_+ \cup S_-$. Further proceedings in the positive or negative $x_m$ direction, depending on the specific problem considered, might lead to regions of overlapping characteristics or an unbounded increase of the solution or its derivatives which would limit the domain with a well defined unique solution. Nevertheless we would expect there to exist a maximal domain with a unique well defined solution. For instance consider the union of all domains which a unique well defined solution exists with unique characteristics connecting the points of the domain to the initial condition domain. Other regions of the domain $P_1$ are regions which no solution, that is related to the initial condition, exists, i.e. there is no characteristic connecting that region to the initial condition domain, or multiple solutions exist with multiple characteristics connecting a point in that region to the initial condition domain. \qed
\end{proof}

\section{Generalizations and application of Theorem 3.1}
\label{sec:4}
\subsection{Dependence of initial condition and coefficients on parameters}
\label{sec:4.1}
In this Subsection we consider the dependence of the initial condition $I$ and coefficients $C_{il}$ and $D_i$ on parameters and show that their Lipschitz or $C^r$ dependence on the parameters is inherited to the solution. The Proposition is as follows:
\begin{prop} \onehalfspacing Consider extending the definition of $C_{il}$, $D_{i}$ and $I_i$ of Theorem \ref{theorem3.1} to $\bar{C}_{il}: P \times P_3 \rightarrow \mathbb{R}$, $\bar{D}_{i}: P \times P_3 \rightarrow \mathbb{R}$ and $\bar{I}_i: V \times P_3 \rightarrow \mathbb{R}$ with $P_3 \equiv \{ w \in \mathbb{R}^d |  \left\Vert w - w_0 \right\Vert_\infty \leq c \}$, $P \equiv P_1 \times P_2$ with $P_1$, $P_2$ and $V$ defined in Theorem \ref{theorem3.1}. Let $\bar{C}_{il}$, $\bar{D}_{i}$ and $\bar{I}_i$ be Lipschitz or $C^r$ with $\bar{I}_i (x , w_0) = I_i (x)$, $\bar{C}_{il}(x,y,w_0) = {C}_{il}(x,y)$ and $\bar{D}_{i}(x,y,w_0) = D_{i}(x,y)$, also let $M_{\Vert \bar{I} - y_0 \Vert} < b$ with $M_{\Vert \bar{I} - y_0 \Vert} \equiv \max \{ \Vert \bar{I}(u,w) - y_0 \Vert_{\infty} | (u,w) \in V \times P_3 \} $. Then the following system of partial differential equations:
\begin{equation}
\label{eq:4.1}
\bar{C}_{i1}(x,y,w) {\partial y_i \over \partial x_1} + ... + \bar{C}_{im-1}(x,y,w) {\partial y_i \over \partial x_{m-1}} + {\partial y_i \over \partial x_m} = \bar{D}_i(x,y,w)
\end{equation}
has a unique Lipschitz continuous or $C^r$ solution respectively, $\bar{f} : B \times P_3 \rightarrow P_2$ for $V \subset B \subseteq P_1$, $B$ containing a neighbourhood of $V_{int}$, with $V_{int}$ defined in Theorem \ref{theorem3.1} and $\bar{f}$ reducing to the initial condition function $\bar{I}$ on $V \times P_3$, $\bar{f}(u,w) = \bar{I}(u,w)$ for $(u, w) \in V \times P_3$.
\end{prop}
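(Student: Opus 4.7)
The plan is to recast the parameter-dependent problem as an instance of Theorem \ref{theorem3.1} by promoting the parameters $w$ to additional independent variables with trivial transport. First, I would introduce enlarged coordinates $\tilde{x} = (x_1, \dots, x_{m-1}, w_1, \dots, w_d, x_m) \in \mathbb{R}^{\tilde{m}}$ with $\tilde{m} = m + d$, arranged so that the coordinate carrying coefficient $1$ in the PDE remains the last one, $\tilde{x}_{\tilde{m}} = x_m$. Correspondingly define
\[
\tilde{C}_{il}(\tilde{x},y) \equiv \begin{cases} \bar{C}_{il}(x,y,w), & 1 \leq l \leq m-1, \\ 0, & m \leq l \leq \tilde{m}-1, \end{cases}
\qquad \tilde{D}_i(\tilde{x},y) \equiv \bar{D}_i(x,y,w),
\]
so that the extended system
\[
\sum_{l=1}^{\tilde{m}-1} \tilde{C}_{il}(\tilde{x},y)\,\frac{\partial y_i}{\partial \tilde{x}_l} + \frac{\partial y_i}{\partial \tilde{x}_{\tilde{m}}} = \tilde{D}_i(\tilde{x},y)
\]
collapses, thanks to the vanishing of the last $d$ coefficients, to the original PDE \ref{eq:4.1} with $w$ acting as an inert parameter.

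Next I would verify the hypotheses of Theorem \ref{theorem3.1} for this enlarged system on the parallelepiped $\tilde{P}_1 \times P_2$ with $\tilde{P}_1 \equiv P_1 \times P_3$ and initial hyperplane $\tilde{V} \equiv V \times P_3$ given by $\tilde{x}_{\tilde{m}} = x_{0m}$. The extended coefficients $\tilde{C}_{il}, \tilde{D}_i$ are Lipschitz (resp. $C^r$) because $\bar{C}_{il}, \bar{D}_i$ are, and the identically zero coefficients are smooth; the initial condition $\tilde{I}(\tilde{u}) \equiv \bar{I}(u,w)$ inherits Lipschitz (resp. $C^r$) regularity from $\bar{I}$, and by hypothesis $M_{\Vert \tilde{I} - y_0 \Vert} = M_{\Vert \bar{I} - y_0 \Vert} < b$. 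Applying Theorem \ref{theorem3.1} to this system yields a unique Lipschitz (resp. $C^r$) solution $\tilde{f}$ on the standard domains $\tilde{S}_+ \cup \tilde{S}_-$, and setting $\bar{f}(x,w) \equiv \tilde{f}(\tilde{x})$ produces a function that is jointly Lipschitz (resp. $C^r$) in $(x,w)$ and reduces to $\bar{I}$ on $V \times P_3$.

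To finish, for each fixed $w \in P_3$ the restriction $\bar{f}(\,\cdot\,,w)$ automatically solves the original PDE \ref{eq:4.1} (the $\partial/\partial w_k$ terms drop out because their coefficients vanish) with initial data $\bar{I}(\,\cdot\,,w)$. Uniqueness then follows either globally, from uniqueness of $\tilde{f}$, or fibrewise, since any other parameter-dependent solution would, upon fixing $w$, contradict the uniqueness clause of Theorem \ref{theorem3.1}.

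The main technical point to monitor is the locality condition \ref{eq:3.47}: replacing $m$ by $\tilde{m} = m + d$ and using the (possibly larger) Lipschitz constants of $\bar{C}_{il}$ and $\bar{D}_i$ in place of $L_C, L_D$ can only shrink the admissible $\alpha$, so the neighbourhood of $V_{int}$ on which $\bar{f}$ is produced may be strictly smaller than the one originally obtained in Theorem \ref{theorem3.1}. This is purely a bookkeeping concern; no new analytic idea is needed beyond the embedding above.
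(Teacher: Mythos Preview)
Your embedding of the parameter $w$ as extra spatial coordinates with vanishing transport coefficients is correct and is a genuinely different route from the paper's. The paper does not invoke Theorem \ref{theorem3.1} as a black box; instead it reruns the entire construction of Section \ref{sec:3} with $w$ carried along as a spectator argument: the bounds $f^{N,k}_{i,M},f^{N,k}_{i,m}$ and the auxiliary sequence $f^{N,k}_i$ of \eqref{eq:3.52} are redefined to depend on $w$, a recursion for the joint Lipschitz constant in $(x,w)$ is derived directly (it turns out to be identical in form to \eqref{eq:3.34}, with $L_C,L_D,L_I$ replaced by $L_{\bar C},L_{\bar D},L_{\bar I}$), and the $C^1$ argument of Subsection \ref{sec:3.3} is repeated for the $w$-derivatives. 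Your reduction is more economical and makes clear that nothing conceptually new is happening.

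Two small points to tidy up. First, Theorem \ref{theorem3.1} is stated for a genuine cube $P_1$ and a cube $V$ of common half-side $\bar a$; your $\tilde P_1=P_1\times P_3$ and $\tilde V=V\times P_3$ are boxes with possibly different side lengths in the $x$- and $w$-directions. The proof in Section \ref{sec:3} nowhere uses equality of the sides, so the extension is immediate, but you should say so explicitly. Second, your remark that the locality condition may shrink because $m-1$ becomes $m+d-1$ is overly pessimistic: tracing the derivation of \eqref{eq:3.34} one sees that the factor $(m-1)L_C$ really arises as $\sum_{l=1}^{m-1}L_{C_{il}}$, and since your extra $d$ coefficients are identically zero they contribute nothing. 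With that observation your $\alpha$ matches the paper's, which keeps the factor $(m-1)$ in \eqref{eq:4.7} precisely because the $w$-directions never enter the transport.
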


The construction of $Ufs$ which was done in Section \ref{sec:3} can similarly be done here for a fixed $w$ (or in other words for a spectator $w$ argument) by replacing the constants of the problem
\begin{equation}
\label{eq:4.2}
\begin{split}
& M_{\Vert D \Vert}, M_{\Vert C \Vert}, M_{C_l}, m_{C_l}, L_C, L_D \ \text{defined on} \  P \ \text{with} \ M_{\Vert \bar{D} \Vert}, M_{\Vert \bar{C} \Vert}, M_{\bar{C}_l}, m_{\bar{C}_l}, L_{\bar{C}}, L_{\bar{D}} \\ & \text{defined on} \ P \times P_3 \ \text{and} \ L_I \ \text{defined on} \ V \ \text{with} \ L_{\bar{I}} \ \text{defined on} \ V \times P_3
\end{split}
\end{equation}
and accordingly relation \ref{eq:3.20} and the relations in Section \ref{sec:3} that involve these constants would be modified in this way.

To show that $\bar{Ufs}(x,w)$ is Lipschitz with respect to its $w$ argument consider the sequence of functions in \ref{eq:3.52}. Now with the initial condition and coefficients depending on the parameter $w$ the recursion relation picks up a $w$ dependence
\begin{flalign}
\label{eq:4.3}
& \bar{f}^{N,k}_i (x, w) \! = \! \bar{f}^{N,k-1}_i \!\! \left( x - \bar{C}_i \left( x^{\bar{\nu}} , \bar{f}^{N,k-1} \left( x^{\bar{\nu}}, w \right), w \right) \! {\alpha \over 2^N} , w \right) \! + \! \bar{D}_i \left( x^{\bar{\nu}} , \bar{f}^{N,k-1}(x^{\bar{\nu}}, w),w \right) \! {\alpha \over 2^N} \nonumber \\
& \bar{f}^{N,0}_i (x,w) \equiv \bar{I}_i(x,w) , \ x \in \bar{V}^{N,k} , \ x^{\bar{\nu}}  = x - \bar{\nu} {\alpha \over 2^N} , \ \bar{\nu} = (m_{\bar{C}_l} + M_{\bar{C}_l}) \hat{e}_l /2 + \hat{e}_m
\end{flalign}

The sequence of $\bar{f}^{N,k_N}_i (x, w)$ converges uniformly to $\bar{Ufs}(x, w)$ on $\bar{V}^{N,k_N} \times P_3$ for $q = k_N/2^N$ fixed as $N \rightarrow \infty$ and $1 \leq k_N \leq 2^N$ as can be seen from relation \ref{eq:3.21} after applying \ref{eq:4.2}. Therefore if it is shown that $\bar{f}^{N,k_N}_i (x, w)$ has a bounded Lipschitz constant with respect to $w$, this implies that $\bar{Ufs}(x, w)$ is Lipschitz with respect to $w$ on all $\bar{V}^{N,k_N}$ which then easily generalizes to all points in the domain $\bar{S}^+$(e.g. by varying $\alpha$) . Lets assume $\bar{f}^{N,k-1}_i (x,w)$ is Lipschitz with Lipschitz constant $L_{\bar{f}}^{N,k-1}$ and try to find the Lipschitz constant of $\bar{f}^{N,k}_i (x,w)$. Consider two different points $(x,w), (\widetilde{x},\widetilde{w}) \in \bar{V}^{N,k} \times P_3$. We would like to find $L_{\bar{f}}^{N,k}$ such that $| \bar{f}^{N,k}_i (x,w) - \bar{f}^{N,k}_i (\widetilde{x},\widetilde{w}) | \leq L_{\bar{f}}^{N,k} \{ \sum_l |x_l - \widetilde{x}_l| + \sum^{d}_{u = 1} |w_u - \widetilde{w}_u| \}$. First lets evaluate the difference between each of the terms in \ref{eq:4.3}
\begin{flalign}
\label{eq:4.4}
& \left| \bar{D}_i \left( x^{\bar{\nu}} , \bar{f}^{N,k-1}(x^{\bar{\nu}}, w),w \right) - \bar{D}_i \left( \widetilde{x}^{\bar{\nu}} , \bar{f}^{N,k-1}(\widetilde{x}^{\bar{\nu}}, \widetilde{w}),\widetilde{w} \right) \right| \leq  L_{\bar{D}} \left( 1 + n L_{\bar{f}}^{N,k-1} \right) \\ & \hspace{9cm} \Big{\{} \sum_l |x_l - \widetilde{x}_l | + \sum_u |w_u - \widetilde{w}_u | \Big{\}} \nonumber \\ & \left| \bar{f}^{N,k-1}_i \!\! \left( x \! - \! \bar{C}_i \! \left( x^{\bar{\nu}} \! , \bar{f}^{N,k-1}(x^{\bar{\nu}} \!, w), w \right) \! {\alpha \over 2^N} , w \right) \! - \! \bar{f}^{N,k-1}_i \!\! \left( \widetilde{x} \! -\!  \bar{C}_i \! \left( \widetilde{x}^{\bar{\nu}} \! , \bar{f}^{N,k-1}(\widetilde{x}^{\bar{\nu}} \!, \widetilde{w}), \widetilde{w} \right) \!\! {\alpha \over 2^N} , \! \widetilde{w} \right) \right| \nonumber \\ & \label{eq:4.5} \leq L_{\bar{f}}^{N,k-1} \left\{ 1 + {\alpha \over 2^N} L_{\bar{C}} (m-1) \left( 1 + n L_{\bar{f}}^{N,k-1} \right) \right\} \Big{\{} \sum_l |x_l - \widetilde{x}_l | + \sum_u |w_u - \widetilde{w}_u | \Big{\}}
\end{flalign}
using the above relations we can find a bound for $\left| \bar{f}^{N,k}_i (x, w) - \bar{f}^{N,k}_i (\widetilde{x},\widetilde{w}) \right|$
\begin{flalign}
\label{eq:4.6}
& \left| \bar{f}^{N,k}_i (x, w) - \bar{f}^{N,k}_i (\widetilde{x},\widetilde{w}) \right| \leq \Big{\{} {\alpha \over 2^N } L_{\bar{D}} \left( 1 + n L_{\bar{f}}^{N,k-1} \right) + \nonumber \\  & L_{\bar{f}}^{N,k-1} \left( 1 + {\alpha \over 2^N} L_{\bar{C}} (m-1) \left( 1 + n L_{\bar{f}}^{N,k-1} \right) \right)  \Big{\}} \Big{\{} \sum_l |x_l - \widetilde{x}_l | + \sum_u |w_u - \widetilde{w}_u | \Big{\}}
\end{flalign}
from \ref{eq:4.6} we obtain a similar recursion relation as \ref{eq:3.34} (but with \ref{eq:4.2} applied) for the Lipschitz constants
\begin{equation}
\label{eq:4.7}
 L_{\bar{f}}^{N,k} \equiv L_{\bar{f}}^{N,k-1} \! \left( 1 + \big{(} (m-1)L_{\bar{C}} + n L_{\bar{D}} \big{)} {\alpha \over 2^N} \right) + n(m-1) L_{\bar{C}} {\alpha \over 2^N} \left( L_{\bar{f}}^{N,k-1} \right)^2 \! + L_{\bar{D}} {\alpha \over 2^N}
\end{equation}
This shows that the sequence of Lipschitz constants is locally bounded for all $N$ and $k$, therefore $\bar{Ufs}$ is also Lipschitz with respect to its parametric dependence with $L_{\bar{Ufs}} = \max \{ L_{\bar{f}}, M_{\Vert \bar{D} \Vert} + (m-1)  L_{\bar{f}} M_{\Vert \bar{C} \Vert} \}$ being its Lipschitz constant on $\bar{S}^+\times P_3$(or $\bar{S}^- \times P_3$). Next we show that $\bar{Ufs}$ is $C^1$ with respect to the $x$ and $w$ space. First we show this on the hyperplanes $\bar{V}^{N,k}$. For this it suffices to show Statements $1$, $2.1$ and $2.2$ in Subsection \ref{sec:3.3} for the sequence $\bar{f}^{N,k_N}_i (x, w)$ with $q = k_N / 2^N $ held fixed. Statement 1 was discussed below relation \ref{eq:4.3}: the uniform convergence of $\bar{f}^{N,k_N}_i (x, w)$ to $\bar{Ufs}(x, w)$ as $N \rightarrow \infty$ follows from relation \ref{eq:3.21} after applying \ref{eq:4.2}. $\bar{f}^{N,k_N}_{i,x_l} (x, w)$ and $\bar{f}^{N,k_N}_{i,w_u} (x, w)$ are locally bounded for all $N$ since the Lipschitz constant of $\bar{f}^{N,k_N}(x,w)$ obeys relation \ref{eq:4.7}, hence this shows Statement $2.1$. To show Statement $2.2$ take the partial derivative of \ref{eq:4.3} with respect to $w_u$ and $x_l$
\begin{equation}
\label{eq:4.8}
\begin{split}
& {\partial \over \partial w_u} \bar{f}^{N,\bar{k}}_i (x, \! w) \! = \! \bar{f}^{N,\bar{k}-1}_{i, w_u} \! (\bar{z}, \! w) \! + \! \left( \! - \bar{C}_{ih , w_u}(\bar{p}^{\bar{\nu}}) {\alpha \over 2^N} \! - \! \bar{C}_{ih ,y_s}(\bar{p}^{\bar{\nu}}) \bar{f}^{N,\bar{k} -1}_{s, w_u} (x^{\bar{\nu}} \! , \! w)  {\alpha \over 2^N} \! \right) \! \bar{f}^{N,\bar{k}-1}_{i , \bar{z}_h} \! (\bar{z}, w) \\ & + \bar{D}_{i,w_u}(\bar{p}^{\bar{\nu}}) {\alpha \over 2^N} + \bar{D}_{i,y_s}(\bar{p}^{\bar{\nu}}) \bar{f}^{N,\bar{k}-1}_{s, w_u}(x^{\bar{\nu}},w) {\alpha \over 2^N}, \\
& {\partial \over \partial x_l} \bar{f}^{N,\bar{k}}_i (x, w) \! = \! \bar{f}^{N,\bar{k}-1}_{i , \bar{z}_l}( \bar{z}, w) + \left( \! - \bar{C}_{ih ,x_l}(\bar{p}^{\bar{\nu}}) {\alpha \over 2^N} - \bar{C}_{ih ,y_s}(\bar{p}^{\bar{\nu}}) \bar{f}^{N,\bar{k} -1}_{s, x_l} (x^{\bar{\nu}})  {\alpha \over 2^N} \right) \! \bar{f}^{N,\bar{k}-1}_{i , \bar{z}_h}( \bar{z}, w) \\ & + \bar{D}_{i,x_l}(\bar{p}^{\bar{\nu}}) {\alpha \over 2^N} + \bar{D}_{i,y_s}(\bar{p}^{\bar{\nu}}) \bar{f}^{N,\bar{k}-1}_{s, x_l}(x^{\bar{\nu}}, w) {\alpha \over 2^N} \! , \\ & \bar{p}^{\bar{\nu}} \! \equiv \!\! \left( x^{\bar{\nu}} , \bar{f}^{N,k-1}(x^{\bar{\nu}} , w), w \right), \ \bar{z} = x - \bar{C}_i \left( \bar{p}^{\bar{\nu}} \right) {\alpha \over 2^N}, 1 \leq h \leq m-1 , 1 \leq s \leq n, 1 \leq \bar{k} \leq k_N
\end{split}
\end{equation}
summation over $h$ and $s$ is implicit. Showing that the sequences $\bar{f}^{N,k_N}_{i,w_u} (x, w)$ and $\bar{f}^{N,k_N}_{i,x_l} (x, w)$ are equicontinuous is similar to how this was done for the partial $x_l$ derivatives in Subsection \ref{sec:3.3} as the structure of the recursion relation is the same, therefore by similar arguments starting from the paragraph below relation \ref{eq:3.62} until a few sentences after relation \ref{eq:3.63} we can conclude that $\partial \bar{Ufs}_i / \partial w_u$ and $\partial \bar{Ufs}_i / \partial x_j$ ($j =1, ..., m$) exist and are continuous with respect to the $x$ and $w$ space. Also with similar arguments as in the paragraph below relation \ref{eq:3.63} we can conclude that with a $C^{r+1}$ assumption on $\bar{C}_{il}$, $\bar{D}_i$ and $\bar{I}_i$, $\bar{Ufs}(x,w)$ will be $C^{r+1}$ with respect to $x$ and $w$.

\subsection{Generalization to nonlinear systems of PDE}
\label{sec:4.2}
In this Subsection we will generalize the result of Section \ref{sec:3} to nonlinear systems of PDE. For this we need to conjecture the following for a linear homogeneous first order system of PDE that will be derived later in this Subsection.
\begin{conj1}
\onehalfspacing
\label{conj1}
The following linear homogeneous first order system of PDE:
\begin{equation}
\label{eq:4.9'}
{\partial y \over \partial x_m} + A_l (x) {\partial y \over \partial x_l} + B(x)y = 0
\end{equation}
with $A_l (x)$ and $B (x) $, $n \times n$ $C^1$ matrices defined on $P_1$, can have at most one $C^1$ solution locally that satisfies a $C^1$ initial condition $I_i: V \rightarrow \mathbb{R}$, $y_i(u) = I_i(u)$ , $u \in V$, with $P_1$ and $V$ defined similar to Theorem \ref{theorem3.1}.
\end{conj1}
\textbf{Note}: If the matrices $A_l$ in \ref{eq:4.9'} are symmetric the above conjecture is true according to \cite{03}.

The nonlinear system of PDE that is reducible to the system of PDE of Theorem \ref{theorem3.1} by differentiation is \footnote[17]{The derivation presented here is similar to the one in \cite{01} except that it is for a system of PDE.}:
\begin{equation}
\label{eq:4.10}
G_i (x, y, \nabla y_i) = 0 , \ \ \  i = 1, ..., n
\end{equation}
we assume $G_i: P_1 \times P_2 \times Q_i \rightarrow \mathbb{R}$ is defined with $Q_i \equiv \{ z \in R^m | \Vert z - p_{i0} \Vert_{\infty} \leq c \}$, the points $p_{i0} \in \mathbb{R}^m$ and $c > 0$ will be defined below. The initial condition is given by $I_i: V \rightarrow \mathbb{R}$ and we demand that the functions $y_i$ solving \ref{eq:4.10} reduce to $y_i (u) = I_i (u)$ for $u \in V$ and $M_{\Vert I - y_0 \Vert} < b$. $P_1$, $P_2$, $V$ and $M_{\Vert I - y_0 \Vert}$ are defined similar to Theorem \ref{theorem3.1}. With the $C^{3}$ assumption on $G_i$ and the initial condition $I_i$ we will obtain a $C^{3}$ solution. In order for the existence of a solution to \ref{eq:4.10} that reduces to the initial condition on $V$ to be possible the functions $p^0_{i} = p^0_{i}(u)$ for $ u \in V$ must exist which satisfy the following relations:
\begin{flalign}
\label{eq:4.11}
& G_i (u, I(u), p^0_{i}(u)) = 0 , \\ \label{eq:4.12}
& p^0_{il}(u) - { \partial I_i(u) \over \partial u_l} = 0, \\ \label{eq:4.13}
& { {\partial G_{i} \over \partial p_{im}} } \ne 0 \ \ \ \ \  \text{at}  \ \ \ (u, I(u), p^0_i(u)) 
\end{flalign}
$p^0_{il}$ is $C^{2}$ from \ref{eq:4.12}, therefore due to the implicit function theorem $p^0_{im}$ will also be $C^{2}$ \footnote[18]{It is usually assumed that relations \ref{eq:4.11} - \ref{eq:4.13} hold for a point $u_0 \in V$ which then due to the implicit function theorem it can be inferred that they hold locally in a neighbourhood of $u_0 \in V$. Since we want the solution to reduce to the initial condition on $V$ we have assumed that \ref{eq:4.11} - \ref{eq:4.13} hold on $V$.}. $p_{i0}$ and $c > 0$ are such that $\forall u \in V \rightarrow \Vert p^0_{i}(u)  - p_{i0} \Vert_{\infty} < c$. Differentiating \ref{eq:4.10} with respect to $x_j$ we have
\begin{equation}
\label{eq:4.14}
\begin{split}
& G_{i, p_{iq}} (x, y, p_i) { \partial p_{ij} \over \partial x_q } = - {\partial G_i \over \partial x_j} - {\partial G_i \over \partial y_s} p_{sj} \ \ , \ \ \  s = 1, ..., n , \ \ \ q = 1, ..., m \\ & G_{i, p_{iq}} (x, y, p_i) { \partial y_{i} \over \partial x_q } = G_{i, p_{iq}} (x, y, p_i) p_{iq} 
\end{split}
\end{equation}
summation over $q$ and $s$ is implicit. We have commuted the order of the partial derivatives and replaced ${\partial y_i / \partial x_j} \rightarrow p_{ij}$. From \ref{eq:4.13} it is clear that ${ {\partial G_{i} / \partial p_{im}} } \ne 0 $ in a neighbourhood of the set of points $(u, I(u), p^0_i(u)) \in P_1 \times P_2 \times Q_i$ for $u \in V$, therefore upon dividing \ref{eq:4.14} by $G_{i, p_{im}} (x, y, p_i)$ we obtain a system of PDE similar to relation \ref{eq:3.1} which then a unique solution $y(x)$ and $p_i(x)$ can be constructed locally that would reduce to $y(u) = I(u)$ and $p_i(u) = p^0_i(u)$ for $u \in V$ similar to the way it was done in Section \ref{sec:3}. With the $C^{3}$ assumption on the initial condition $I_i$ and $G_i$ in \ref{eq:4.10}, the coefficients and the initial condition in \ref{eq:4.14} will be $C^{2}$ and therefore we obtain a $C^{2}$ solution to \ref{eq:4.14}.

Now it is possible to show that the $y_i$ of \ref{eq:4.14} solve the system of PDE of \ref{eq:4.10} and are $C^{3}$ assuming Conjecture \ref{conj1} holds. For this we introduce new coordinate systems corresponding to the initial condition hyperplane and the parameter of the characteristic equations of \ref{eq:4.14}. We denote these by $u^{(i)}_1, ...., u^{(i)}_{m-1}$ and $(u^{(i)}_{m} \equiv)$ $t^{(i)}$. By the theory of ordinary differential equations the map $x = x(u^{(i)})$ is $C^{2}$. To show that $y$ solves \ref{eq:4.10} we need to show the following equations:
\begin{flalign}
\label{eq:4.15}
G_i (x, y (x), p_i(x)) = 0 , \\ \label{eq:4.16}
{\partial y_i (x) \over \partial x_j} - p_{ij} (x) = 0
\end{flalign}
from \ref{eq:4.11} it is clear that \ref{eq:4.15} and from \ref{eq:4.12} and the second equation of \ref{eq:4.14} it is clear that \ref{eq:4.16} are true on the initial condition hyperplane $V$. We need to show that they hold locally near the initial condition hyperplane. Showing \ref{eq:4.16} is equivalent to showing $d y_i = p_{ij} dx_j$. In the coordinate system of the initial condition hyperplane $u^{(i)}_1 , ..., u^{(i)}_{m-1}$ and the characteristic parameter $( u^{(i)}_{m} \equiv ) $  $t^{(i)}$ this is equivalent to the following relations:
\begin{flalign}
\label{eq:4.17}
& \lambda_{i l} (u^{(i)}) \equiv {\partial y_i \over \partial u^{(i)}_l} - p_{iq} {\partial x_q \over \partial u^{(i)}_l} = 0 \\ & \label{eq:4.18}
\lambda_{i m} (u^{(i)}) \equiv {\partial y_i \over \partial t^{(i)}} - p_{iq} {\partial x_q \over \partial t^{(i)}} = 0
\end{flalign}
\ref{eq:4.18} is automatically satisfied from the second equation of \ref{eq:4.14} and the characteristic relation $\partial x_q / \partial t^{(i)} = G_{i,p_{iq}}$. \ref{eq:4.17} and \ref{eq:4.15} need to be shown. For this we take a derivative with respect to $t^{(i)}$ of these equations. We have
\begin{flalign}
\label{eq:4.19}
 {\partial \over \partial t^{(i)} }G_i (x, y(x), p_i(x)) & = G_{i, x_j} {\partial x_j \over \partial t^{(i)} } + G_{i,y_s} {\partial y_s \over \partial x_j} { \partial x_j \over \partial t^{(i)}} + G_{i, p_{ij}} {\partial p_{ij} \over \partial t^{(i)}} \nonumber \\ & = G_{i, x_j} G_{i, p_{ij}} + G_{i,y_s} G_{i, p_{ij}} {\partial y_s \over \partial x_j}  + G_{i, p_{ij}} \left( - {\partial G_i \over \partial x_j} - {\partial G_i \over \partial y_s} p_{sj} \right) \nonumber \\ & = G_{i,y_s} G_{i, p_{ij}} \left( {\partial y_s \over \partial x_j} - p_{sj} \right) = G_{i,y_s} G_{i, p_{ij}} {\partial u^{(s)}_q \over \partial x_j } \lambda_{s q} (u^{(s)})
\end{flalign}
\begin{flalign}
\label{eq:4.20}
{\partial \over \partial t^{(i)} } \lambda_{il} (u^{(i)}) & = {\partial \over \partial u^{(i)}_l} \{ G_{i, p_{iq}} p_{iq} \} - \left( - {\partial G_i \over \partial x_q} - {\partial G_i \over \partial y_s} p_{sq} \right) {\partial x_q \over \partial u^{(i)}_l} - p_{iq} {\partial G_{i, p_{iq}} \over \partial u^{(i)}_l} \nonumber \\ & = G_{i, p_{iq}} {\partial p_{iq} \over \partial u^{(i)}_l} + {\partial G_i \over \partial x_q} {\partial x_q \over \partial u^{(i)}_l} + {\partial G_i \over \partial y_s} p_{sq}{\partial x_q \over \partial u^{(i)}_l} = {\partial G_i \over \partial u^{(i)}_l} - {\partial G_i \over \partial y_s} {\partial y_s \over \partial u^{(i)}_l} + {\partial G_i \over \partial y_s} p_{sq}{\partial x_q \over \partial u^{(i)}_l} \nonumber \\ & = {\partial G_i \over \partial u^{(i)}_l} - {\partial G_i \over \partial y_s} {\partial u^{(s)}_j \over \partial u^{(i)}_l  } \lambda_{sj}(u^{(s)})
\end{flalign}
summation over $s, q $ and $j$ is implicit. The change of the order of the partial derivatives are allowed since $ y_i $ and $x_j $ are $C^{2}$ (If we had started with a $C^{2}$ assumption on the $G_i$ and the initial condition functions $I_i$, $y_i$ and the $x_q$ of \ref{eq:4.14} would have been $C^1$ as a function of $u^{(i)}_j$ but the change of the order of the partial derivatives in \ref{eq:4.20} would still be allowed since $\partial y_i / \partial t^{(i)}$ and $\partial x_q / \partial t^{(i)}$ are $C^1$). We also used the fact that the inverse map $u^{(i)} = u^{(i)} (x) $ is differentiable, in particular $C^{2}$, in the above relations, this follows from the fact that the map $x = x(u^{(i)})$ is $C^{2}$ and we know that $\det \{ {\partial x_j / \partial u^{(i)}_q} \} \ne 0$ near the initial condition hyperplane since at the initial condition hyperplane the coordinates $u^{(i)}_1, ...., u^{(i)}_{m-1}$ are the same as $x_1, ...., x_{m-1}$ and $dx_{m} = G_{i,p_{im}}  dt^{(i)}$ for $G_{i,p_{im}} \ne 0 $. Considering everything as a function of $x$ with $u^{(i)} = u^{(i)}(x)$ and rewriting $\partial / \partial t^{(i)}$ in terms of partial derivatives with respect to $x$ and noting that $\lambda_{im} = 0$, we obtain
\begin{equation}
\label{eq:4.21}
\begin{split}
& G_{i, p_{iq}} { \partial \bar{G}_i (x) \over \partial x_q } = G_{i,y_s} G_{i, p_{ij}} {\partial u^{(s)}_{l'} \over \partial x_j } \bar{\lambda}_{s l'} (x) \\
& G_{i, p_{iq}} { \partial \bar{\lambda}_{il} (x) \over \partial x_q } - {\partial \bar{G}_i (x) \over \partial x_q} {\partial x_q \over \partial u^{(i)}_l} = - G_{i, y_s} {\partial u^{(s)}_{l'} \over \partial u^{(i)}_l  } \bar{\lambda}_{sl'}(x)
\end{split}
\end{equation}
$l' \! = \! 1, .., m- 1$. Considering the coefficients of ${ \partial \bar{G}_i / \partial x_q }$, $ \partial \bar{\lambda}_{il} / \partial x_q$ and $\bar{\lambda}_{sl'}$ as known functions of $x$, which based on the assumptions of the theorem are $C^{1}$, it can be seen that the functions $\bar{G}_i(x) \equiv G_i (x, y(x), p_i(x)) $ and $\bar{\lambda}_{il}(x) \equiv \lambda_{il} (u^{(i)}(x))$ satisfy a linear homogeneous first order partial differential equation\footnote[19]{In order for $\lambda_{sl}$ to satisfy a linear system of PDE, $y_s$ should be at least two times differentiable, this is the main reason the coefficients and the initial condition in \ref{eq:4.10} were assumed $C^{3}$ so that the solution of \ref{eq:4.14} and in particular $y_s$ would be $C^{2}$. We do not rule out the possibility of improving this $C^{3}$ differentiability assumption. For example  with a $C^{2}$ assumption on the coefficients $G_i$ and $I_i$, \ref{eq:4.19} and \ref{eq:4.20} are still valid as the change of the order of partial derivatives is still allowed as mentioned in the sentences below equation \ref{eq:4.20}. In this case \ref{eq:4.19} and \ref{eq:4.20} are linear homogeneous partial differential equations for $G_i$ and $\lambda_{sl}$ in different coordinate systems(!) with coefficients that are at least continuous and it obviously has a solution of zero based on an initial condition of zero. If this can be defined properly and a similar conjecture as Conjecture 1 holds for it then it is possible to start with a $C^{2}$ assumption on $G_i$ and $I_i$. A more optimum differentiability assumption is that we start with a $C^1$ assumption with Lipschitz continuous derivatives on $G_i$ and $I_i$, in this case if we assume there is a $C^1$ solution with Lipschitz continuous derivatives to \ref{eq:4.10} then this solution will inevitably be given by the unique Lipschitz solution to \ref{eq:4.14}. In this case it might be possible to show that this Lipschitz solution solves \ref{eq:4.10} near $V$ as this is true for when we only have one equation with one unknown function (when $n =1$) in \ref{eq:4.10} as stated in \cite{01}.} in the form of relation \ref{eq:4.9'} \footnote[20]{With $G_{i,p_{im}} \ne 0$ near $V$ after dividing \ref{eq:4.21} by $G_{i,p_{im}}$ we obtain a similar form as \ref{eq:4.9'}. Note that the term $\sim \partial \bar{G}_i(x) / \partial x_m$ in the second relation of \ref{eq:4.21} can be eliminated by multiplying the first relation of \ref{eq:4.21} by $ (\partial x_m / \partial u^{(i)}_l ) / G_{i,p_{im}}$ and adding it to the second relation of \ref{eq:4.21}.}, and since they vanish on the initial condition hyperplane $V$, assuming Conjecture \ref{conj1} holds, they should also vanish near the initial condition hyperplane \footnote[21]{\ref{eq:4.21} clearly has a solution of zero based on an initial condition of zero. We might ask the question as to whether this is a unique solution.  Here we will try to argue in favour of a unique solution. Having another solution other than zero would lead to some unsatisfactory results. For example if we have a non-zero solution then a constant multiple of that solution would also be a solution based on an initial condition of zero and this generates an infinite family of solutions. Or considering the discretization of the system of PDE of \ref{eq:4.21} the values of the discretized solution obtained at each discretized hyperplane parallel to the initial condition hyperplane would all be zero, therefore it seems that a non-trivial solution cannot be captured by the discretization of the system of PDE of \ref{eq:4.21}. Also from \cite{03} it is known that when the $A_l$ matrices are symmetric, \ref{eq:4.9'} can have at most one solution. Therefore it seems plausible to conjecture that Conjecture \ref{conj1} holds for general $n \times n$ $C^1$ matrices $A_l$ and accordingly the linear homogeneous first order system of PDE of \ref{eq:4.21} would admit at most one solution locally based on an initial condition of zero.}. Therefore \ref{eq:4.15} and relations \ref{eq:4.17} and \ref{eq:4.18} (or equivalently \ref{eq:4.16}) are valid near $V$. This shows that $y = y(x)$ of \ref{eq:4.14} solves the system of PDE of \ref{eq:4.10} near $V$ and since $p_{ij}$ is $C^{2}$, $y$ would be $C^{3}$. It is also possible to combine the results of Subsections \ref{sec:4.1} and \ref{sec:4.2} easily by extending the definition of the initial condition and $G_i$ functions of \ref{eq:4.10} to have a parametric dependence $w$ on a compact parameter space. With assuming the conditions and assumptions in this Subsection hold for any fixed $w$ (or in other words for any spectator $w$ argument) in the compact parameter space the discussion of this Subsection is similarly valid without any change. The only point to note is that with a $C^{3}$ assumption on the initial condition and the functions $G_i$, the solutions obtained for the system of PDE of \ref{eq:4.14} will be $C^{2}$ with respect to the $x$ and $w$ space, therefore $y_i$ and ${\partial y_i / \partial x_j }$ ($ = p_{ij}$) will be $C^{2}$ with respect to the $x$ and $w$ space.

At the end of this Subsection we note that the initial condition can also be defined on an arbitrary hypersurface instead of a hyperplane. In this case it is possible to reduce the problem to one that is defined on a hyperplane by a change of variables. Consider the following $C^{3}$ hypersurface $x: U \rightarrow \mathbb{R}^m$, $x = x (u_1, ..., u_{m-1})$ for $U \equiv [-a,a]^{m-1}$, and $\partial x (u) / \partial u $ has rank $m-1$. We demand that the functions $y$ solving \ref{eq:4.10} reduce to $y(x(u)) = I(u)$ on this hypersurface for some set of $C^{3}$ initial condition functions $I_i: U \rightarrow \mathbb{R}$. Since the rank of ${\partial x / \partial u}$ is $m-1$ at any point $u_0 \in U$ there exists $m-1$ rows of the matrix ${\partial x / \partial u}$ that are linearly independent. Without loss of generality we take the first $m-1$ rows to be linearly independent. Therefore we can change coordinates from $u_1, ..., u_{m-1}$ to $x_1, ..., x_{m-1}$ near $u = u_0$. Since $\det \{ {\partial x_l / \partial u_{l'}} \} \ne 0$ ($l, l' = 1, ..., m-1$) near $u_0$ the inverse map $u = u(x_1, ..., x_{m-1})$ is also $C^{3}$. Next we change coordinates from $( x_1, ..., x_{m-1}, x_m (u(x_1, ..., x_{m-1})) )$ to $( x_1, ..., x_{m-1}, x'_m )$ with $x'_m = x_m - x_m(u (x_1, ..., x_{m-1}))$ and in this new coordinate system the hypersurface near $x(u_0)$ is given by $x'_m = 0$. Note that the functions $G_i$ of \ref{eq:4.10} and the initial condition functions $I_i$ remain $C^{3}$ in this new coordinate system, $G_i (x, y ,\nabla y_i) = G_i (x_1, ..., x_{m-1}, x'_m + x_m (u(x_1, ..., x_{m-1})) , y , {\partial y_i / \partial x_1}, ..., {\partial y_i / \partial x_{m-1}}, {\partial y_i / \partial x'_{m}} )$, $I_i = I_i(u(x_1, ..., x_{m-1}))$.

\subsection{Application to hyperbolic quasilinear systems of first order PDE in two independent variables}
\label{sec:4.3}
In this Subsection we will show that a hyperbolic quasilinear system of first order PDE in two independent variables can be reduced to the system of PDE of Theorem \ref{theorem3.1}. Consider the following hyperbolic quasilinear system of first order PDE in two independent variables $x_1$ and $x_2$
\begin{flalign}
\label{eq:4.22}
{\partial y \over \partial x_2} + A(x, y) {\partial y \over \partial x_1} = B(x, y)
\end{flalign}
$A(x,y)$ and $B(x, y)$ are $n \times n$ and $n \times 1$ $C^{1}$ matrices, respectively, with Lipschitz continuous derivatives defined on $P_1 \times P_2$ with $P_1$, for $m=2$, and $P_2$ defined similar to Theorem \ref{theorem3.1}. It is assumed that $A$ has $n$ real eigenvalues $\tau^i(x, y)$ which form a diagonal matrix $\mathrm{T}(x,y)$ and $n$ linearly independent left eigenvectors $l^i(x, y)$ which form a matrix $\Lambda (x,y)$ with determinant one, $\mathrm{T}$ and $\Lambda$ are also considered $C^1$ with Lipschitz continuous derivatives \footnote[22]{For when the eigenvalues $\tau^i$ are distinct this follows from the fact that $A$ is $C^1$ with Lipschitz continuous derivatives.}. Furthermore we demand that the functions $y_i$ reduce to a set of initial condition functions on $V$, $f_i(u) = I_i(u)$ for $u \in V$, $I_i: V \rightarrow \mathbb{R}$ being $C^1$ with Lipschitz continuous derivatives and $V$, for $m=2$, defined similar to Theorem \ref{theorem3.1}.

To reduce the system of PDE above to the form of Theorem \ref{theorem3.1}  take the derivative of \ref{eq:4.22} with respect to $x_r$
\begin{flalign}
\label{eq:4.23}
{\partial p_{r} \over \partial x_2} + A {\partial p_{r} \over \partial x_1} = B_{,x_r} + p_{sr} B_{, y_{s}} - A_{,x_r} p_{1} - p_{sr} A_{, y_{s}} p_{1} \equiv C(x,y,p_1, p_2)
\end{flalign}
summation over $s =1, .., n$ is implicit, we have changed the order of the partial derivatives \footnote[23]{The change of the order of derivatives is allowed almost everywhere since based on the differentiability assumptions on $A$, $B$ and $I_i$, the partial derivative of the solution, $\partial y / \partial x_r$, will be Lipschitz and therefore is differentiable almost everywhere.} and replaced $\partial y / \partial x_r \rightarrow p_{r}$ for $r=1,2$ and $p_{sr} \equiv ( p_{r} )_{s}$. Next multiply \ref{eq:4.23} by $\Lambda$ and define the new function variables $\bar{p}_r \equiv \Lambda p_{r}$, we have 
\begin{flalign}
\label{eq:4.24}
{\partial \bar{p}_r \over \partial x_2} + \mathrm{T} {\partial \bar{p}_r \over \partial x_1} = (\Lambda_{,x_2} + \Lambda_{,y_s}p_{s2} ) p_r  + \mathrm{T} ( \Lambda_{,x_1} + \Lambda_{,y_s} p_{s1}) p_r + \Lambda C
\end{flalign}
and the PDE for $y$ is given by
\begin{flalign}
\label{eq:4.25}
{\partial y \over \partial x_2} + \mathrm{T} {\partial y \over \partial x_1} = p_2 + \mathrm{T} p_1
\end{flalign}
(or $\partial y / \partial x_2 = p_2$ is also a valid choice instead of \ref{eq:4.25}) the system of PDE of \ref{eq:4.24} and \ref{eq:4.25}, in terms of the functions $y$ and $\bar{p_r}$ (with $p_r = \Lambda^{-1} \bar{p}_r$), has the form of Theorem \ref{theorem3.1} with coefficients and initial condition that are Lipschitz. The initial condition is given by $y(u) = I(u)$, $\bar{p}_1(u) = \Lambda (u, I(u)) \partial I (u) / \partial u_1 $ and $\bar{p}_2(u) = \Lambda (u, I(u)) \{ B(u,I(u)) - A(u, I(u))  \partial I (u) / \partial u_1 \} $ for $u \in V$. From \cite{03} it is known that \ref{eq:4.22} has a local unique $C^1$ solution with Lipschitz continuous derivatives that satisfies the initial condition, therefore it is clear that this solution is given by the local unique Lipschitz solution of \ref{eq:4.24} and \ref{eq:4.25}: $y$ and $\partial y / \partial x_r = p_r = \Lambda^{-1} \bar{p}_r$. This shows that Theorem \ref{theorem3.1} gives an alternative way, which is more direct and convenient especially for finding a numerical solution (e.g. The discretized form of the solution can be obtained by considering relation \ref{eq:3.52} for the system of PDE of \ref{eq:4.24} and \ref{eq:4.25}), as compared to other methods, e.g. iteration methods \cite{03}, for the construction of the solution of hyperbolic quasilinear systems of first order PDE in two independent variables.

\appendix
\section{}
\label{appen:A}
\onehalfspacing
In this Appendix we will list the equivalent definitions and relations of Section \ref{sec:3} for when constructing a solution on the $S_-$ domain. The $S_-$ domain is defined as
\begin{equation}
\label{eq:A1}
S_- \! \equiv \! \left\{ x \in P_1 \! \left| -\alpha \leq x_m - x_{0 m} \leq 0 , - \bar{a} + m_{C_l} ( x_m - x_{0 m} ) \leq  x_l - x_{0 l}  \leq \bar{a} + M_{C_l} ( x_m - x_{0 m} ) \right. \right\}
\end{equation}
and $\alpha$ satisfies the 3 conditions listed below relation \ref{eq:3.3}. $M_{C_l}$ and $m_{C_l}$, similar to before, refer to an upper and lower bound for $C_{il}$ for $i =1, ..., n$ on P, respectively. Relation \ref{eq:3.4} is modified to
\begin{flalign}
\label{eq:A2}
& f^{N, k}_{i,m}(x) \equiv f^{N, k-1}_{i,m,V_{\text{res.}}}(x) - M^{N,k}_{ D_i}(x) {\alpha \over 2^N} \leq f_i(x) \leq f^{N, k-1}_{i,M,V_{\text{res.}}}(x) - m^{N,k}_{D_i}(x) {\alpha \over 2^N} \equiv f^{N, k}_{i, M}(x) \nonumber \\ & x \in V^{N,k}_-, V^{N,k}_- \equiv \left\{ z \in S_- \left| z_m = x_{0m} - k \alpha / 2^N \right. \right\}, \ V^{N,0}_- \equiv V, \ k =1, ...,2^N  &&
\end{flalign}
$f^{N, 0}_{i,M}(x) = f^{N, 0}_{i,m}(x) \equiv I_i(x)$ for $x \in V$. $f^{N, k}_{i, M}, f^{N, k}_{i, m}: V^{N,k}_- \rightarrow \mathbb{R} $. $f^{N, k-1}_{i, M ,V}(x)$, $f^{N, k-1}_{i, m ,V}(x)$, $M^{N,k}_{D_i}(x)$ and $m^{N,k}_{D_i}(x)$ for $x \in V^{N,k}_-$ are given by
\begin{equation}
\label{eq:A3}
\begin{split}
 & \hspace{-0.3cm} f^{N, k-1}_{i, M ,V}(x) \equiv \max \left\{ f^{N, k-1}_{i, M}(z) \left| z \in S^{N,k}_{-, x} \cap V^{N,k-1}_- \right. \right\},\\ & \hspace{-0.3cm} f^{N, k-1}_{i, m ,V}(x) \equiv \min \left\{ f^{N, k-1}_{i, m}(z) \left| z \in S^{N,k}_{-, x} \cap V^{N,k-1}_- \right. \right\}, \\
& \hspace{-0.3cm} M^{N,k}_{D_i}(x) \! \equiv \! \max \left\{ D_i(z,y) \left| (z,y) \in P^{N,k}_{-, x} \right. \right\}, m^{N,k}_{D_i}(x) \! \equiv \! \min \left\{ D_i(z,y) \left| (z,y) \in P^{N,k}_{-, x} \right. \right\}
\end{split}
\end{equation}
$S^{N,k}_{-, x}$ and $P^{N,k}_{-, x}$ for $x \in V^{N,k}_-$ are given by
\begin{flalign}
\label{eq:A4}
& S^{N,k}_{-, x} \equiv \left\{ z \in S_- \left| 0 \leq z_m - x_{m} \leq {\alpha / 2^N}, \right. \right. \\ & \hspace{1 cm} \left. m_{C_l} ( z_m - x_{m}) \leq z_l - x_l \leq M_{C_l} ( z_m - x_{m}) \right\} \nonumber \\
& P^{N,k}_{-, x} \equiv \left\{ (z,y) \left| z \in S^{N,k}_{-, x} ,\ f^{N, k-1}_{i, m, V}(x) \! - \! M_{\Vert D \Vert} {\alpha \over 2^N} \leq y_i \leq f^{N, k-1}_{i, M,V}(x) + M_{\Vert D \Vert} {\alpha \over 2^N}, i = 1, ..., n \right. \right\} \nonumber &&
\end{flalign}
$f^{N, k-1}_{i, M, V_{\text{res.}}}(x)$ and $f^{N, k-1}_{i, m, V_{\text{res.}}}(x)$ for $x \in V^{N,k}_-$ given by
\begin{flalign}
\label{eq:A5}
& f^{N, k-1}_{i,M, V_{\text{res.}}}(x) \equiv \max \left\{ f^{N, k-1}_{i, M}(z) \left| z \in V^{N,k-1}_{\text{res.} , i, x} \right. \right\} \nonumber \\
& f^{N, k-1}_{i, m, V_{\text{res.}}}(x) \equiv \min \left\{ f^{N, k-1}_{i, m}(z) \left| z \in V^{N,k-1}_{\text{res.} , i, x} \right. \right\}  \\
& V^{N, k-1}_{\text{res.} ,i, x} \equiv \left\{ z \in S^{N,k}_{-, x} \cap V^{N,k-1}_- \left| m^{N,k}_{C_{il}}(x) \alpha / 2^N  \leq z_l - x_l \leq M^{N,k}_{C_{il}}(x) \alpha / 2^N \right. \right\} \nonumber
\end{flalign}
and $M^{N,k}_{C_{il}}(x)$ and $m^{N,k}_{C_{il}}(x)$ given by
\begin{flalign}
\label{eq:A6}
\hspace{-0.3cm} M^{N,k}_{C_{il}}(x) \equiv \max \left\{ C_{il}(z,y) \left| (z,y) \in P^{N,k}_{-, x} \right. \right\}, \ \ m^{N,k}_{C_{il}}(x) \equiv \min \left\{ C_{il}(z,y) \left| (z,y) \in P^{N,k}_{-, x} \right. \right\}
\end{flalign}
Relation \ref{eq:3.52} is modified to
\begin{flalign}
\label{eq:A7}
& f^{N,k}_i (x) = f^{N,k-1}_i ( x + C_i(x^{\nu} , f^{N,k-1}(x^{\nu})) \alpha/ 2^N ) - D_i(x^{\nu} , f^{N,k-1}(x^{\nu})) \alpha /2^N \nonumber \\
& f^{N,0}_i \equiv I_i , \ x \in V^{N,k}_- , \ x^{\nu}  = x + \nu {\alpha \over 2^N} , \ \nu = (m_{C_l} + M_{C_l}) \hat{e}_l /2 + \hat{e}_m
\end{flalign}
Relations \ref{eq:3.18} and \ref{eq:3.20} are equivalently valid with $\alpha > 0$ being the extent which, in general, the solution can be constructed below the initial condition hyperplane and $\Delta f^{N,k}$ a bound for $f^{N, k}_{i, M}(x) - f^{N, k}_{i, m}(x)$ for $x \in V^{N,k}_-$ and $L^{N,k}$ being the Lipschitz constant of $f^{N, k}_{i, M}$ or $f^{N, k}_{i, m}$ defined in \ref{eq:A2} on $V^{N,k}_-$.
\section{}
\label{appen:B}
In this Appendix we will show in detail that the bounds set for the solution in Section \ref{sec:3} at the $N +1$ step of the partitioning lie within the bounds of the $N$ step of the partitioning. $P^{N,k}_{+,x}$ for $x \in V^{N,k}$ in Section \ref{sec:3} was defined such that if $f$ is a solution to the system of PDE \ref{eq:3.1} subject to the initial condition and its characteristic curves $x^{(i)}(t)$ for $i = 1, ..., n$ pass through the point $x$, $x^{(i)}(x_m) = x$ with $x_m = x_{0m} + k \alpha /2^N$, then $(x^{(i)}(t), f(x^{(i)}(t))) \in P^{N,k}_{+,x}$ for $ - \alpha /2^N \leq t - x_{m} \leq 0$, therefore with defining $P^{N}_+ \equiv \cup^{2^N}_{k=1} (\cup_{z \in V^{N,k}} P^{N,k}_{+,z})$ we have $(z, f(z)) \in P^{N}_+$, $\forall z \in S_+$, i.e. the graph of the solution on $S_+$ is a subset of $P^{N}_+$. Here we will show that $P^{N+1}_+ \subseteq P^N_+$.

Lets assume (i): $f^{N, k-1}_{i, m}(\bar{x}) \leq f^{N+1, 2(k-1)}_{i, m}(\bar{x})$ and $f^{N+1, 2(k-1)}_{i, M}(\bar{x}) \leq f^{N, k-1}_{i, M}(\bar{x})$ for $\bar{x} \in V^{N,k-1} = V^{N+1,2(k-1)}$ (note that this is true for $k-1 = 0$) and try to prove (ii): $f^{N, k}_{i,m}(x) \leq f^{N+1, 2k}_{i,m}(x)$ and $f^{N+1, 2k}_{i, M}(x) \leq f^{N, k}_{i, M}(x)$ for $x \in V^{N,k} = V^{N+1,2k}$.

Here we show $f^{N+1, 2k}_{i, M}(x) \leq f^{N, k}_{i, M}(x)$, the proof of $f^{N, k}_{i,m}(x) \leq f^{N+1, 2k}_{i,m}(x)$ is similar. Based on the definitions in Section \ref{sec:3}
\vspace*{-0.3cm}
\begin{flalign}
\label{eq:B1}
& f^{N+1, 2k}_{i, M}(x) = f^{N+1, 2k-1}_{i, M}(x^i_1 ) + M^{N+1,2k}_{D_i}(x) {\alpha / 2^{N+1}} \nonumber \\ & \hspace{1.7cm} = f^{N+1, 2k-2}_{i, M}(x^i_2) + M^{N+1,2k-1}_{D_i}(x^i_1) {\alpha / 2^{N+1}} + M^{N+1,2k}_{D_i}(x) {\alpha / 2^{N+1}} \\ &  f^{N, k}_{i, M}(x) = f^{N, k-1}_{i, M}(x^i ) + M^{N,k}_{D_i}(x) {\alpha / 2^{N}} \nonumber
\end{flalign}
$x^i_1 \in V^{N+1, 2k-1}_{\text{res.} ,i, x}$, $x^i_2 \in V^{N+1, 2k-2}_{\text{res.} ,i, x^i_1 }$ and $x^i \in V^{N, k-1}_{\text{res.} ,i,x}$ are the points which $f^{N+1, 2k-1}_{i, M}$, $f^{N+1, 2k-2}_{i, M}$ and $f^{N,k-1}_{i, M}$ assume their maximum values in $V^{N+1, 2k-1}_{\text{res.} ,i,x}$, $V^{N+1, 2k-2}_{\text{res.} ,i,x^i_1 }$ and $V^{N, k-1}_{\text{res.} ,i,x}$ respectively. It can be shown that $V^{N+1, 2k-2}_{\text{res.} ,i,x^i_1} \subseteq V^{N, k-1}_{\text{res.} ,i,x}$ therefore from the assumption $f^{N+1, 2(k-1)}_{i,M}(\bar{x}) \leq f^{N, k-1}_{i, M}(\bar{x})$, $\bar{x} \in V^{N,k-1}$ it follows that $f^{N+1, 2k-2}_{i, M}(x^i_2) \leq f^{N, k-1}_{i, M}(x^i)$. Also it can be shown that $P^{N+1, 2k}_{+,x} \subseteq P^{N, k}_{+,x}$ and $P^{N+1, 2k-1}_{+, x^i_1} \subseteq P^{N, k}_{+,x}$ therefore $M^{N+1,2k}_{D_i}(x) \leq M^{N,k}_{D_i}(x)$ and $M^{N+1,2k-1}_{D_i}(x^i_1) \leq M^{N,k}_{D_i}(x)$, this proves $f^{N+1, 2k}_{i, M}(x) \leq f^{N, k}_{i, M}(x)$. \\

To complete the proof we have to show $V^{N+1, 2k-2}_{\text{res.} ,i , x^i_1 } \subseteq V^{N, k-1}_{\text{res.} ,i, x}$ , $P^{N+1, 2k}_{+,x} \subseteq P^{N, k}_{+, x}$ and $P^{N+1, 2k-1}_{+,x^i_1} \subseteq P^{N, k}_{+,x}$. Take $x' \in S^{N+1,2k}_{+,x} \cap V^{N+1,2k-1}$($= S^{N,k}_{+,x} \cap V^{N+1,2k-1}$) , it is clear that $S^{N+1,2k}_{+,x} \subset S^{N,k}_{+,x}$ and $S^{N+1,2k-1}_{+,x'} \subset S^{N,k}_{+,x}$ from their definitions given by \ref{eq:3.6}, also lets review the definitions of $P^{N+1,2k-1}_{+,x'}$, $P^{N,k}_{+,x}$ and $P^{N+1,2k}_{+,x}$
\vspace*{-0.2cm}
\begin{flalign}
\label{eq:B2}
& P^{N+1,2k}_{+,x} = \left\{ (z,y) \left| z \in S^{N+1,2k}_{+,x} ,  f^{N+1, 2k-1}_{i, m, V}(x) - M_{\Vert D \Vert} {\alpha \over 2^{N+1}}  \leq y_i \leq f^{N+1, 2k-1}_{i, M, V}(x) + M_{\Vert D \Vert} {\alpha \over 2^{N+1}} \right. \right\} \nonumber \\ & P^{N,k}_{+,x} = \left\{ (z,y) \left| z \in S^{N,k}_{+,x} ,  f^{N, k-1}_{i, m, V}(x) - M_{\Vert D \Vert} {\alpha \over 2^{N}}  \leq y_i \leq f^{N, k-1}_{i, M, V}(x) + M_{\Vert D \Vert} {\alpha \over 2^{N}} \right. \right\} \\ & P^{N+1,2k-1}_{+,x'} \! \! = \! \left\{ (z,y) \left| z \in S^{N+1,2k-1}_{+,x'} \! \! ,  f^{N+1, 2k-2}_{i, m, V}(x') - M_{\Vert D \Vert} {\alpha \over 2^{N+1}} \! \leq y_i \! \leq f^{N+1, 2k-2}_{i, M, V}(x') + \! M_{\Vert D \Vert} {\alpha \over 2^{N+1}} \right. \! \! \right\} \nonumber
\end{flalign}
from \ref{eq:B2} it is clear that $P^{N+1,2k-1}_{+,x'} \subset P^{N,k}_{+,x}$, since $\{ S^{N+1,2k-1}_{+,x'} \subset S^{N,k}_{+,x} \}$ and $ \{ f^{N+1, 2k-2}_{i, m, V}(x') \geq f^{N, k-1}_{i, m, V}(x)$ and $f^{N+1, 2k-2}_{i, M, V}(x') \leq f^{N, k-1}_{i, M, V}(x)$ by assumption (i) and the fact that $ (S^{N+1,2k-1}_{+,x'} \cap V^{N+1,2k-2} ) \subset (S^{N,k}_{+,x} \cap V^{N,k-1}) \} $. Now because $x^i_1 \in V^{N+1, 2k-1}_{\text{res.} ,i, x} \subseteq S^{N+1,2k}_{+,x} \cap V^{N+1,2k-1}$ this shows that $P^{N+1,2k-1}_{+,x^i_1} \subset P^{N,k}_{+,x}$. Also $P^{N+1,2k}_{+,x} \subset P^{N,k}_{+,x}$ since $S^{N+1,2k}_{+,x} \subset S^{N,k}_{+,x}$ and
\vspace*{-0.2cm}
\begin{equation}
\label{eq:B3}
\begin{split}
f^{N+1, 2k-1}_{i, m}(x') = f^{N+1, 2k-2}_{i, m, V_{\text{res.}}}(x') + m^{N +1,2k-1}_{D_i}(x') \alpha /2^{N+1} \geq f^{N, k-1}_{i, m, V}(x) - M_{\Vert D \Vert} \alpha /2^{N+1} \\ f^{N+1, 2k-1}_{i, M}(x') = f^{N+1, 2k-2}_{i, M, V_{\text{res.}}}(x') + M^{N +1,2k-1}_{D_i}(x') \alpha /2^{N+1} \leq f^{N, k-1}_{i, M, V}(x) + M_{\Vert D \Vert} \alpha /2^{N+1}
\end{split}
\end{equation}
the last inequalities in the above relation follow from the fact that $V^{N+1, 2k-2}_{\text{res.}, i, x'} \subset (S^{N+1,2k-1}_{+,x'} \cap V^{N+1,2k-2} ) \subset (S^{N,k}_{+,x} \cap V^{N,k-1}) $. From \ref{eq:B3} it can be concluded that $f^{N+1, 2k-1}_{i, m, V}(x) - M_{\Vert D \Vert} {\alpha / 2^{N+1}} \geq f^{N, k-1}_{i, m, V}(x) - M_{\Vert D \Vert} \alpha /2^{N}$ and $f^{N+1, 2k-1}_{i, M, V}(x) + M_{\Vert D \Vert} {\alpha / 2^{N+1}} \leq f^{N, k-1}_{i, M, V}(x) + M_{\Vert D \Vert} \alpha /2^{N}$ since \ref{eq:B3} holds for all $x' \in S^{N+1,2k}_{+,x} \cap V^{N+1,2k-1} $, therefore this shows $P^{N+1,2k}_{+,x} \subset P^{N,k}_{+,x}$.

To show $V^{N+1, 2k-2}_{\text{res.} ,i , x^i_1 } \subseteq V^{N, k-1}_{\text{res.} ,i, x}$ , lets review their definitions
\begin{flalign}
\label{eq:B4}
& V^{N+1, 2k-2}_{\text{res.} ,i , x^i_1 }\!\! \equiv \! \left\{ z \! \in \! S^{N+1,2k-1}_{+, x^i_1} \! \cap \! V^{N+1,2k-2} \left|  -  M^{N+1,2k-1}_{C_{il}}(x^i_1) {\alpha \over 2^{N+1}} \! \leq \! z_l \! - \! x^i_{1l} \leq \! - m^{N+1,2k-1}_{C_{il}}(x^i_1) {\alpha \over 2^{N+1}} \right. \right\} \nonumber \\ & V^{N, k-1}_{\text{res.} ,i, x} \equiv \left\{ z \in S^{N,k}_{+, x} \cap V^{N,k-1} \left| - M^{N,k}_{C_{il}}(x) \alpha / 2^N  \leq z_l - x_l \leq - m^{N,k}_{C_{il}}(x) \alpha / 2^N \right. \right\}
\end{flalign}
and $V^{N+1, 2k-1}_{\text{res.} ,i, x}$ is given by
\begin{flalign}
\label{eq:B5}
 V^{N+1, 2k-1}_{\text{res.} ,i, x} \!\! \equiv \! \left\{ z \! \in \! S^{N+1,2k}_{+, x} \cap V^{N+1,2k-1} \! \left|  -  M^{N+1,2k}_{C_{il}}(x) {\alpha \over 2^{N+1}} \! \leq z_l \! - \! x_l \leq \! - m^{N+1,2k}_{C_{il}}(x) {\alpha \over 2^{N+1}} \right. \! \right\}
\end{flalign}
for $x^i_1 \in V^{N+1, 2k-1}_{\text{res.} ,i, x}$ adding the inequalities in \ref{eq:B5} and in the first relation of \ref{eq:B4} we can conclude that if $z \in V^{N+1, 2k-2}_{\text{res.} ,i , x^i_1 }$ then
\begin{flalign}
\label{eq:B6}
- {1 \over 2} \! \left( M^{N+1,2k}_{C_{il}}(x) + M^{N+1,2k-1}_{C_{il}}(x^i_1) \right) {\alpha \over 2^{N}} \! \leq \! z_l - x_l \leq \! - {1 \over 2} \! \left( m^{N+1,2k-1}_{C_{il}}(x^i_1) + m^{N+1,2k}_{C_{il}}(x) \right) {\alpha \over 2^{N}}
\end{flalign}
as previously shown $P^{N+1,2k}_{+,x} \subset P^{N,k}_{+,x}$, $P^{N+1,2k-1}_{+,x^i_1} \subset P^{N,k}_{+,x}$ therefore
\begin{equation}
\label{eq:B7}
\begin{split}
& - {1 \over 2} \! \left( m^{N+1,2k-1}_{C_{il}}(x^i_1) + m^{N+1,2k}_{C_{il}}(x) \right) \leq - m^{N,k}_{C_{il}}(x) \\ & -{1 \over 2} \! \left( M^{N+1,2k-1}_{C_{il}}(x^i_1) + M^{N+1,2k}_{C_{il}}(x) \right) \geq - M^{N,k}_{C_{il}}(x) 
\end{split}
\end{equation}
this shows $V^{N+1, 2k-2}_{\text{res.} ,i , x^i_1 } \subseteq V^{N, k-1}_{\text{res.} ,i, x}$ . From the discussion above it is clear that $P^{N+1}_+ \subseteq P^N_+$.


\end{document}